\newcommand{\vol}{{\rm vol}}
\newcommand{\pr}{{\mathbb P}}
\newcommand{\q}{q^*}
\newcommand{\cA}{{\mathcal A}}
\numberwithin{theorem}{section}
\numberwithin{definition}{section}
\numberwithin{lemma}{section}
\numberwithin{corollary}{section}
\numberwithin{proposition}{section}
\numberwithin{conjecture}{section}
\numberwithin{remark}{section}
\numberwithin{equation}{section}
\begin{document}
\title{Note on edge expansion and modularity in preferential attachment graphs}
\author{Colin McDiarmid\inst{1} \and
Katarzyna Rybarczyk\inst{2} \and
Fiona Skerman\inst{3} \and Ma\l gorzata~Sulkowska\inst{4}}
\authorrunning{C.~McDiarmid, K.~Rybarczyk, 
F.~Skerman and M.~Sulkowska}
\institute{Department of Statistics, University of Oxford, UK \email{cmcd@stats.ox.ac.uk}\and
Adam Mickiewicz University, Pozna\' n, Poland \email{kryba@amu.edu.pl}
\and Department of Mathematics, Uppsala University, Sweden
 \email{fiona.skerman@math.uu.se}
 \and Department of Fundamentals of Computer Science, Wroc{\l}aw University of Science and Technology, Poland \email{malgorzata.sulkowska@pwr.edu.pl}}
\maketitle              
\begin{abstract}
	Edge expansion is a parameter indicating how well-connected a graph is. It is useful for designing robust networks, analysing random walks or information flow through a network and is an important notion in theoretical computer science.   
    Modularity is a measure of how well a graph can be partitioned into communities and is widely used in clustering applications. We study these two parameters in two commonly considered models of random preferential attachment graphs, with~$h\geq 2$ edges added per step. 
    We establish new bounds for the likely edge expansion for both random models. 
    Using bounds for edge expansion of small subsets of vertices, we derive new upper bounds also for the modularity values for small $h$.

\keywords{Random graph \and Preferential attachment \and Edge expansion \and Modularity.}
\end{abstract}
\section{Introduction and statement of results}  
\subsection{Introduction} 
"Preferential attachment", often referred to as the "rich get richer" effect, describes a pattern in the growth of dynamic graph models where new nodes  tend to form links with nodes that already have numerous connections. 	
The earliest random graphs incorporating the idea of preferential attachment were random recursive trees \cite{Szymanski1993,Szymanski1987}. 
However, it was not until the results of  Barab{\'a}si and Albert \cite{BA_basic} that the connection between the phenomenon of "preferential attachment" and certain natural properties of networks was demonstrated, initiating more in-depth research into various models based on this property.  
In this paper, we focus on two fundamental models of such graphs, $G_n^h$ and $\tilde{G}_n^h$. They are generated from a finite initial graph by adding new vertices one at a time. Each new vertex connects to $h\ge 2$ already existing vertices, and these are chosen with probability proportional to their current degrees (for the exact definitions consult Section~\ref{sec:models}). For generalised preferential attachment models -- see~\cite{hazra2023percolation,Hofstad_2017}.

We investigate two parameters that describe certain important characteristics of networks: edge expansion and modularity. Edge expansion (also known as isoperimetric number, Cheeger constant or, simply, expansion), denoted by $\alpha(G)$, is a parameter indicating how well-connected a graph $G$ is.
It lower bounds the ratio of the number of edges leaving a set $S$ of vertices to the size of $S$ (see Definition~\ref{def:uexpansion}).  Edge expansion is particularly important in the study of network robustness or efficiency of communication in complex systems and also in the theory of error correcting codes~\cite{HoLiWi06}. 

The second graph parameter we consider is modularity, denoted by $q^*(G)$. It was introduced in~2004 by Newman and Girvan~\cite{NeGi04} as a measure of how well a graph can be partitioned into communities. Roughly speaking, a vertex-partition with high modularity score has many more internal edges than we would expect if the edges were re-wired randomly, matching the original degree sequence of the graph (see Definition~\ref{def:modularity}). The modularity score of a partition is used in clustering algorithms popular in applications~\cite{BlGuLaLe08,KaPrTh21,NeBook18,TrWaEc19}. Early theoretical research on modularity was conducted for trees \cite{Bagrow_trees_12} and random regular graphs and lattices~\cite{McSk_reg_lattice_13}.  
Results for the basic and most studied random graph, binomial $G(n,p)$, were given in~\cite{DiSk20} and recently partially extended in~\cite{RybarczykSulkowska_Gnp}. See the final section of~\cite{mcdiarmid_skerman_2025_chapter} for a summary of known modularity values, and for further recent work consult~\cite{LaSu23,LiMi22,modexpansion,Rybarczyk2025randomintersectiongraphs}.

First, we revisit the result of Mihail, Papadimitriou and Saberi~\cite{mihail2006journal} who proved that for large $h$ the expansion of $\tilde{G}_n^h$ is greater than $1/5$ with high probability (whp) -- that is, with probability tending to one as $n \to \infty$. 
We show that, with slight modifications of their proof, one may obtain a whp lower bound on expansion (and also expansion of small sets), in both $G_n^h$ and $\tilde{G}_n^h$, which is linear in $h$, see Theorem~\ref{thm:our_u_expansion_fussy} and Corollary~\ref{cor:exp_linear_in_h}. 
(Note that similar linear dependence on $h$ for $\tilde{G}_n^h$ was claimed, but not proved, in the conference version of the paper by Mihail et~al.~\cite{mihail2003conf}. The journal version of the Mihail et.~al.~paper, i.e.~\cite{mihail2006journal} contains the full proof but the dependence is not linear in $h$, see Theorem~\ref{thm:mihail_expansion} below.) In a related direction, Hazra, van der Hofstad and Ray~\cite{hazra2023percolation} studied expansion of large sets for very general classes of preferential attachment graphs (including both $G_n^h$ and $\tilde{G}_n^h$) though they also did not obtain linear dependence in $h$.

By considering our new expansion result for small sets, we improve the whp upper bound for $q^*(\tilde{G}_n^h)$ obtained by Prokhorenkova, Pra\l at and Raigorodskii~\cite{prokhorenkova2017modularity_internet_Math}, and show that our bound extends to $G_n^h$ also (see Corollary~\ref{cor.q_upper_bound}). The previous best upper bound for fixed $h$ (from~\cite{prokhorenkova2017modularity_internet_Math}) is $\q(\tilde{G}_n^h) \leq 1-1/10h$ (see Section~\ref{sec:modularity} and Remark~\ref{rem:result_really_is} for details). Our methods give the whp upper bound on $\q(\tilde{G}_n^h)$ and $\q(G_n^h)$ of $0.92383$, which is independent of~$h$ and thus whp bounds the modularity values away from $1$ for all $h \geq 2$.
 
 As a byproduct of our proofs we also obtain a general upper bound on modularity value in terms of expansion of small sets, see 
 Theorem~\ref{thm:general_upper_bound_mod_2}.

The second and fourth author proved recently in~\cite{rybarczyk2025modularity} that $q^*(G_n^h)$ may be whp upper bounded by a function of $h$ which tends to $0$ as $h \rightarrow \infty$ -- see Theorem~\ref{thm.sqrth}. However, for $2 \leq h \leq 959$ 
the upper bound obtained in this paper is tighter. 
We recall that for practical applications the average degree is often relatively small, so the results in this paper fill an important gap.

\subsection{Basic definitions}

All the graphs considered in this paper are finite, and loops and multiple edges are allowed (thus they are multigraphs). For any set $S$, $S^{(k)}$ stands for the set of all $k$-element subsets of $S$. A graph is a pair $G=(V,E)$, where $V$ is a finite set of vertices and $E$ is a finite multiset of elements from $V^{(1)} \cup V^{(2)}$. For $S \subseteq V$ let $\bar{S} = V \setminus S$.  For $S,U \subseteq V$ set $E_G(S) = \{e \in E \cap (S^{(1)} \cup S^{(2)})\}$ (the set of inner edges of $S$) and $E_G(S,U) = \{e \in E: e \cap S \neq \emptyset \wedge e \cap U \neq \emptyset\}$ (the set of edges between $S$ and $U$). Denote also $e(G) = |E|$, $e_G(S) = |E_G(S)|$ and $e_G(S,U) = |E_G(S,U)|$. The degree of a vertex $v \in V$ in $G$, denoted by $\deg_G(v)$, is the number of edges to which $v$ belongs but loops are counted twice, i.e., $\deg_G(v) = 2 |\{e \in E: v \in e \wedge e \in V^{(1)}\}| + |\{e \in E: v \in e \wedge e \in  V^{(2)}\}|$. In some special cases a loop may contribute only $1$ to the degree of a vertex but this will be always clearly indicated. Next, we define the volume of $S \subseteq V$ in $G$ by $\vol_G(S) = \sum_{v \in S} \deg_G(v)$, and write $\vol(G)$ for $\vol_G(V)$. Whenever the context is clear we omit the subscript $G$ and write, e.g., $e(S)$ instead of $e_G(S)$.\\

We focus on two important graph parameters: edge expansion (also in a version restricted to small sets) and modularity. For connections between modularity and conductance, a parameter similar to edge expansion, consult~\cite{modexpansion}.

\begin{definition}[Expansion and $u$-bounded expansion] \label{def:uexpansion} Let $G=(V,E)$ and $0 < u \leq 1/2$. The \emph{$u$-bounded edge expansion} (or $u$-\emph{expansion}) of $G$ is given by
	\[
	\alpha_u(G) = \min_{\substack{S \subseteq V \\ 1 \leq |S| \leq u|V|}} \frac{e(S,\bar{S})}{|S|}
	\]
	except that if $u|V|<1$ we set $\alpha_u(G)=\infty$. 
	Set the \emph{expansion} of a graph $G$ to be~$\alpha(G)=\alpha_{1/2}(G)$.
\end{definition}

\begin{definition}[Modularity, \cite{NeGi04}] \label{def:modularity}
	Let $G=(V,E)$ be a graph with at least one edge. For a partition $\mathcal{A}$ of $V$ define the
    modularity score of $G$ to be
	\[
	q_{\mathcal{A}}(G) = \sum_{S\in\mathcal{A}}\left(\frac{e(S)}{e(G)}-\left(\frac{\vol(S)}{\vol(G)}\right)^2\right)
    .
	\]
	The modularity of $G$ is given by $\q(G) = \max_{\mathcal{A}}q_{\mathcal{A}}(G)$,
	where the maximum is over all partitions $\cA$ of $V$. For graphs $G$ with no edges, for each partition~$\cA$ we set $q_\cA(G)=\q(G)=0$.
\end{definition}

\subsection{The preferential attachment models}\label{sec:models}

The results stated in this paper refer to two commonly considered versions of preferential attachment graphs.\\

\noindent\textbf{Model $\mathbf{G_n^h}$.} Given $h,n \in \mathbb{N}$, we sample the random preferential attachment graph $G_n^h$ as follows. Let $T_1$ be the graph consisting of a single mini-vertex $1$ with a loop $e_1$ (thus the degree of vertex $1$ is $2$). For $t \geq 1$, the graph $T_{t+1}$ is built upon $T_{t}$ by adding a mini-vertex $t+1$, and joining it by an edge $e_{t+1}$ to a random mini-vertex $i$ according to
\[
\pr(i=s) = \begin{cases}  \frac{\deg_{T_t}(s)}{2t+1} & \textnormal{for} \quad 1 \leq s \leq t  \\
	\frac{1}{2t+1} & \textnormal{for} \quad s=t+1.
\end{cases}
\]
We continue the process to obtain the random tree $T_{hn}$. (We call $T_{hn}$ a tree, even though $T_{hn}$ is not necessarily connected, and loops, i.e. single-vertex edges, are allowed in $T_{hn}$.) Next, a random multigraph $G_n^h$ is obtained from $T_{hn}$ by merging each set of mini-vertices $\{h(i-1)+1, h(i-1)+2, \ldots, h(i-1)+h\}$ 
to a single vertex $i$ for $i \in \{1,2, \ldots, n\}$, keeping loops and multiple edges.
This definition of the preferential attachment graph can be found in \cite{Bol_Rio_chapter,Bol_Rio_diameter,Bollobas2001_deg_seq,Hofstad_2017}.\\

\noindent\textbf{Model $\mathbf{\tilde{G}_n^h}$.} The following similar model, which we shall denote by $\tilde{G}_n^h$, was analyzed in \cite{mihail2003conf,mihail2006journal} (here also $h,n \in \mathbb{N}$). Let $\tilde{T}_1$ be again a graph consisting of a single mini-vertex $1$ with a loop $e_1$, but this time $e_1$ contributes just $1$ to the degree of mini-vertex $1$. For $t \geq 1$, the graph $\tilde{T}_{t+1}$ is built upon $\tilde{T}_{t}$, similarly to above, with 
\[
\pr(i=s) = \frac{\deg_{\tilde{T}_t}(s)}{2t-1}  \quad \textnormal{for} \quad 1 \leq s \leq t.
\]
A random multigraph $\tilde{G}_n^h$ is obtained from $\tilde{T}_{hn}$ by a merging analogous to the one described for~$G_n^h$. 
Each loop, apart from $e_1$, contributes $2$ to the degree of a vertex (the loop $e_1$ contributes $1$ to the degree of vertex 1).\\

Observe that $G_n^h$ has average degree $2h$, and $\tilde{G}_n^h$ has average degree close to $2h$. One may see that $\tilde{G}_n^h$ is always connected, while $G_n^h$ is not necessarily connected (however, by Theorem 1 in \cite{Bol_Rio_diameter} we know that for $h\geq 2$ it is connected whp).
Moreover, $\tilde{T}_{hn}$ does not feature loops apart from $e_1$, while $T_{hn}$ allows for a loop at each time step. In $\tilde{G}_n^h$ there is one special loop $e_1$, which contributes only $1$ to the vertex degree (and there may be other loops).

For $G$ sampled according to the above random processes we write $G \sim G_n^h$ and $G \sim \tilde{G}_n^h$, respectively.

\subsection{Expansion of the preferential attachment models.}\label{sec:expansion}

Mihail, Papadimitriou, and Saberi proved in \cite{mihail2006journal} that whp the edge expansion of $G \sim \tilde{G}_n^h$ is at least a positive constant.

\begin{theorem}[Theorem 1 of \cite{mihail2006journal}] \label{thm:mihail_expansion} 
	Let $h \geq 2$ and $G \sim \tilde{G}_n^h$. Then for any $c$ with     $0<c<2(h-1)-1$ if $\,\hat{\alpha} < \min\left\{ \frac{h-1}{2}-\frac{c+1}{4}, \frac15, \frac{(h-1)\ln 2 -\frac{2}{5}\ln 5}{2(\ln h +\ln 2 +1)}\right\}$ then  
	\[
	\pr\big(\;\alpha(G) \geq \hat{\alpha}\;\big) = 1 - o(n^{-c}).
	\]
\end{theorem}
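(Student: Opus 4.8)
The plan is to follow Mihail, Papadimitriou and Saberi, arguing on the mini-vertex tree $\tilde T_{hn}$ that underlies $\tilde G_n^h$. First I would record the bookkeeping. A vertex set $S$ with $|S|=k$ pulls back to a set $S'$ of $hk$ mini-vertices (a union of $k$ blocks); let $\tau_1<\tau_2<\cdots<\tau_{hk}$ be the arrival times of the mini-vertices of $S'$, and let $A$ count the indices $j$ for which the edge thrown by mini-vertex $\tau_j$ attaches inside $S'$. A direct count on the tree gives $e(S,\bar S)=(hk-A)+B$ and $\vol(S)=hk+A+B$ (up to the special loop $e_1$), where $B\ge0$ is the number of mini-vertices outside $S'$ whose edge attaches inside $S'$. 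Hence, if $S$ is \emph{bad}, i.e.\ $e(S,\bar S)<\hat\alpha k$, then $A>(h-\hat\alpha)k$ and $B<\hat\alpha k$, so that at every time $t$ one has $\vol_{\tilde T_t}(S')\le 2\,|S'\cap\{1,\dots,t\}|+\hat\alpha k$; in particular $\vol_{\tilde T_{\tau_j-1}}(S')\le 2(j-1)+\hat\alpha k$. Thus ``$S$ bad'' is contained in the event that at least $(h-\hat\alpha)k$ of the attachment steps made by the mini-vertices of $S'$ land back in $S'$, while the boundary of $S'$ stays below $\hat\alpha k$ throughout.

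For the range $1\le k\le\delta n$, with $\delta$ a small constant fixed later, I would run the union bound $\pr(\exists\text{ bad }S,\,|S|=k)\le\sum_{|S|=k}\pr(S\text{ bad})$. For a fixed $S$ the times $\tau_1<\cdots<\tau_{hk}$ are determined, and by the chain rule
\[
\pr(S\text{ bad})\ \le\ \sum_{\substack{I\subseteq\{1,\dots,hk\}\\ |I|\ge(h-\hat\alpha)k}}\ \prod_{j\in I}\frac{2(j-1)+\hat\alpha k}{2\tau_j-3},
\]
where the conditional probability that step $\tau_j$ lands in $S'$, equal to $\vol_{\tilde T_{\tau_j-1}}(S')/(2\tau_j-3)$, is bounded using the indicator of the event $\{$boundary of $S'$ so far $<\hat\alpha k\}$, which depends only on $\tilde T_{\tau_j-1}$ and so legitimises the estimate $\vol_{\tilde T_{\tau_j-1}}(S')\le2(j-1)+\hat\alpha k$. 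Summands with $k<1/\hat\alpha$ contribute nothing because $\tilde G_n^h$ is connected, so then $e(S,\bar S)\ge1>\hat\alpha k$. For $1/\hat\alpha\le k\le\delta n$ one must show that the right-hand side, summed over the $\binom nk$ sets $S$ (for most of which the block structure forces the $\tau_j$ to be well spread out, making $\prod(2\tau_j-3)^{-1}$ small) and over the $\binom{hk}{\le\hat\alpha k}$ choices of $I$, still sums to $o(n^{-c})$ over $k$. Balancing $\binom nk$ against $\prod(2\tau_j-3)^{-1}$ is what produces the constraint $\hat\alpha<\tfrac{h-1}{2}-\tfrac{c+1}{4}$, and the per-step cost of choosing $I$ — together with the fact that the heaviest degrees sit on the earliest mini-vertices — is what produces the $\ln h+\ln2+1$ in the denominator of the third term.

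For the complementary range $\delta n\le k\le n/2$ a bound over the $\binom nk\le2^n$ sets is too weak, so I would use concentration. Fix such an $S$, so that the side of the cut containing each mini-vertex is determined. On the event that $S$ is bad, $\vol(S')<(2h+\hat\alpha)k<hn$, and since $hk\le hn/2$ the set $S'$ has fully arrived by time $hn/2$; hence every edge thrown sufficiently late carries a constant fraction of the total weight on the far side of the cut, and so crosses it with probability at least a positive constant depending only on $\delta$ and $h$. The $\Theta(hn)$ such steps are conditionally lower-bounded uniformly, so a Chernoff bound gives $\pr(S\text{ bad})\le e^{-\Omega(hn)}$. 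Splitting $[\delta n,n/2]$ into $[n/5,n/2]$, handled against $2^n$, and $[\delta n,n/5]$, handled against $\binom n{n/5}$, and optimising the late-cutoff time, makes the sum over $k$ in this range $o(n^{-c})$ exactly when $\hat\alpha<\tfrac15$ and $\hat\alpha<\tfrac{(h-1)\ln2-\frac25\ln5}{2(\ln h+\ln2+1)}$; this is where the $\tfrac15$, $\tfrac25\ln5$ and $(h-1)\ln2$ enter. Finally I would choose $\delta$ and $c$ so that both regimes give $o(n^{-c})$ and combine.

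The step I expect to be the main obstacle is the intermediate part of the first regime: a bad set may try to consist of the earliest mini-vertices, which carry the largest degrees and hence the largest attachment probabilities, so the product $\prod_{j\in I}\frac{2(j-1)+\hat\alpha k}{2\tau_j-3}$ is summably small only after one carefully exploits both the block structure of $S'$ and the sharp bound $\vol_{\tilde T_t}(S')\le2|S'\cap\{1,\dots,t\}|+\hat\alpha k$. It is this tension that keeps the present bound from being linear in $h$ and forces the factor $\ln h$; removing it, as required for the sharper Theorem~\ref{thm:our_u_expansion_fussy}, calls for a more delicate version of this estimate.
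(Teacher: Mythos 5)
Your bookkeeping on the mini-vertex tree is correct, and your small-$k$ argument is in the right spirit, but the proposal is missing the one idea that actually makes the proof close, and the large-$k$ half goes down a road that cannot yield the constants in the statement. The actual argument (Lemma~2 of \cite{mihail2006journal}, reproduced here as Lemma~\ref{lem:lemma_2_MPS}/\ref{lem:lemma_2_us}) does not bound $\prod_{j\in I}(2\tau_j-3)^{-1}$ set by set and then fight the sum over the $\binom nk$ choices of $S$ --- which is exactly the step you flag as ``the main obstacle'' and leave open. Instead it conditions on the exact boundary edge-set $A$ and telescopes the product of conditional probabilities over \emph{all} non-boundary steps (both those in $S'$ and those in $\bar S'$), using the identity~\eqref{eq:hitsAll} that the running count $|X^{(t)}|+|\bar X^{(t)}|$ sweeps through $1,\dots,hn-|A|$. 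The arrival times $\tau_j$ cancel completely and one gets the clean bound $\binom{hk}{|A|}/\binom{hn-|A|}{hk-|A|}$ depending only on $k$, $n$, $|A|$. A union bound over $S$ and $A$ then gives $f(k)=\hat\alpha k(eh/\hat\alpha)^{2\hat\alpha k}(k/n)^{(h-1-2\hat\alpha)k}$ (Lemma~\ref{lem:existingbound_k}), valid uniformly for $1\le k\le n/2$. Without this exchangeability-type cancellation your per-set product does not obviously beat $\binom nk$ for sets concentrated on early mini-vertices, so the core of the proof is absent.

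The second regime is where the proposal would actually fail to prove the stated theorem. There is no separate concentration argument for $\delta n\le k\le n/2$: the same $f(k)$ is used for all $k$, together with the unimodality of $f$ (Lemma~\ref{lem:functionf}), so that only the two endpoints need checking. The condition $\hat\alpha<\frac{h-1}{2}-\frac{c+1}{4}$ comes from $f$ at small $k$ (forcing $2(h-1-2\hat\alpha)>c+1$), and \emph{all three} of the remaining constants come from the single elementary computation that $f(n/2)\to0$ requires $(eh/\hat\alpha)^{2\hat\alpha}2^{-(h-1-2\hat\alpha)}<1$, i.e.\ $2\hat\alpha(\ln h+\ln 2+1)+2\hat\alpha\ln(1/\hat\alpha)<(h-1)\ln 2$; the hypothesis $\hat\alpha<1/5$ is there only to bound $\hat\alpha\ln(1/\hat\alpha)\le\frac15\ln5$, which produces the $\frac25\ln5$ and leaves the third term of the min. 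Your attributions (that $\ln h+\ln2+1$ comes from ``the per-step cost of choosing $I$'' and that $\frac15$, $\frac25\ln5$ come from splitting at $n/5$ and optimising a Chernoff cutoff) are therefore not right, and a Chernoff bound of the form $e^{-\Omega(hn)}$ versus $2^n$ would at best prove a statement with different, unoptimised constants --- it cannot reproduce the precise threshold $\frac{(h-1)\ln2-\frac25\ln5}{2(\ln h+\ln2+1)}$ that the theorem asserts. To repair the proposal you should replace both regimes by the single bound of Lemma~\ref{lem:existingbound_k} plus the unimodality argument.
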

Note that, for large $h$, this theorem implies a whp lower bound of $1/5$ for the expansion of $G \sim \tilde{G}_n^h$. By Definition~\ref{def:uexpansion}, this implies that for large $h$ and any vertex subset~$S$ with $|S| \le n/2$, the number of outgoing edges from $S$ is at least $|S|/5$. However, one would expect that the denser $\tilde{G}_n^h$ is, the larger the expansion; i.e., $\alpha(\tilde{G}_n^h)$ should increase with $h$.

In this paper we modify the proof from \cite{mihail2006journal}, and obtain a whp lower bound for the expansion of $G \sim \tilde{G}_n^h$ and $G \sim G_n^h$ which is linear in $h$. We also extend their result to edge expansion of small sets (i.e., we present the result for $\alpha_u(G)$, where $0<u\leq 1/2$, not only for $\alpha(G) = \alpha_{1/2}(G)$).

\begin{theorem} \label{thm:our_u_expansion_fussy}
	Let $h\geq 2$, 
    $G\sim G_n^h$ or $G \sim \tilde{G}_n^h$, and $0 < u \leq 1/2$. Let also $0< x \leq 1$	be such that $(e/u x)^{2h x} < (1/u)^{h-1}$. If 
    $\hat{\alpha}_u < \min\{ (h-1)/2, hx\}$, then there exists a constant $c>0$ such that
	\[
	\pr\big(\;\alpha_u(G) \geq \hat{\alpha}_u \;\big) = 1 - o(n^{-c}).
	\]
\end{theorem}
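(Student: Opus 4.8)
The plan is to follow the union-bound / first-moment strategy underlying the Mihail--Papadimitriou--Saberi argument, but carried out in the mini-vertex tree $T_{hn}$ (resp. $\tilde T_{hn}$) rather than in the merged graph, and tracking the small-set parameter $u$ and the exponent $x$ explicitly. Concretely, I would fix a target size $s$ with $1\le s\le un$ and a candidate vertex set $S$ of size $s$, and bound the probability that $S$ is a ``bad cut'', meaning $e(S,\bar S) < \hat\alpha_u\, s$. Translating to mini-vertices, $S$ corresponds to a set $S'$ of $hs$ mini-vertices, and $e_G(S,\bar S)$ equals the number of tree-edges $e_t$ joining a mini-vertex outside $S'$ to one inside $S'$ (loops and within-$S'$ edges do not cross). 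So it suffices to control, over all $hs$-subsets $S'$ of $\{1,\dots,hn\}$ that arise as a union of $h$-blocks, the event that fewer than $\hat\alpha_u s = (\hat\alpha_u/h)\cdot hs$ of the edges $e_2,\dots,e_{hn}$ cross $S'$.

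The core estimate is a per-edge conditional bound: given the history $T_t$, the probability that edge $e_{t+1}$ lands inside a prescribed set $A$ of mini-vertices is $\vol_{T_t}(A\cap[t])/(2t+1)$ (and the analogous expression with $2t-1$ in the tilde model), which is at most $|A|/t$ up to lower-order terms once one notes $\vol_{T_t}([t]) \approx 2t$ and uses that the merged degree of any vertex is bounded by crude concentration (or, more robustly, one avoids degree control entirely and argues directly that for a fixed set $S'$ the number of non-crossing edges is stochastically dominated by a sum of Bernoullis). The clean way, mirroring \cite{mihail2006journal}, is: for a fixed $S'$, the number of edges with \emph{both} endpoints in $S'$ is stochastically at most a sum of independent-ish indicators with success probabilities $\approx |S'\cap[t]|/t \le hs/t$; hence the chance that at least $hs - \hat\alpha_u s$ of the $\approx hs$ ``late'' edges incident to $S'$ stay inside $S'$ decays like $\big((e\cdot hs / (\text{threshold}))\big)^{\text{threshold}}$ times $\big(hs/n\big)^{\Theta(hs)}$ — this is where the hypothesis $(e/ux)^{2hx} < (1/u)^{h-1}$ is designed to make the union bound converge. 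Then multiply by the number of choices of $S'$, which is $\binom{n}{s}\le (en/s)^s \le (e/u)^s$ for $s\le un$, and sum over $s$.

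In more detail, for the union bound I would split into the range of small $s$ (say $s \le \log n$, or $s$ up to some slowly growing function) and the range $\log n < s \le un$. For large $s$ the factor $(s/n)^{\Theta(hs)}$ is tiny and easily beats $(e/u)^s$. For small $s$ one does not get the $(s/n)$ gain, so one instead uses the condition $\hat\alpha_u < hx$: the probability that a fixed $s$-set is bad is at most roughly $\big((e h s)/( (h - \hat\alpha_u/?)\cdot\text{stuff})\big)^{\Theta(hs)}$, and after plugging $s$-dependence one wants the exponent of the per-set probability to dominate the $s\log(e/u)$ from the binomial; the inequality $(e/ux)^{2hx}<(1/u)^{h-1}$ is exactly the condition ensuring the combined exponent is negative, with the extra $\min\{(h-1)/2,\hat\alpha_u\}$ constraint handling the regime where the relevant count of ``forced internal'' edges among the $h$-blocks (each block contributes about $h-1$ internal tree-edges, contributing the $(h-1)/2$ term) would otherwise make every small set look bad. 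I also need to handle the finitely many ``initial'' mini-vertices and the loop $e_1$ separately; they contribute only $O(1)$ to any cut and are absorbed into the $o$-terms. Finally, $c>0$ is extracted as the gap between the decay rate of the full sum and $n^0$; since the hypothesis inequality is strict, there is room for a positive power of $n$.

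The main obstacle, I expect, is making the stochastic-domination step fully rigorous for a \emph{fixed} candidate set while ranging $t$ from $s$ up to $hn$: the probabilities $|S'\cap[t]|/t$ depend on how the $h$-blocks defining $S$ are interleaved with the time order (early blocks are ``present'' for more steps and have larger degree), so one must either (a) take a worst-case block arrangement and show it only costs constant factors, or (b) follow \cite{mihail2006journal} in coupling the edge-assignment process to a simpler urn/Bernoulli process for which the tail bound is transparent. A secondary subtlety is the difference between $G_n^h$ and $\tilde G_n^h$: the normalizations $2t+1$ versus $2t-1$, and the different loop conventions, change lower-order terms but not the exponential rate, so I would prove the bound for one model with explicit constants and remark that the other is identical up to $1+o(1)$ factors. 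Everything else — the binomial bound $\binom{n}{s}\le(en/s)^s$, splitting the sum, extracting $c$ — is routine.
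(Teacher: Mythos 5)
Your overall architecture matches the paper's: a first-moment/union bound over vertex sets of each fixed size $k\le un$, with the per-set probability supplied by (an extension of) Lemma~2 of Mihail, Papadimitriou and Saberi, yielding a bound of the form $f(k)=\hat\alpha k\,(eh/\hat\alpha)^{2\hat\alpha k}(k/n)^{(h-1-2\hat\alpha)k}$ to be summed over $k$. However, there are two genuine problems. First, your analysis of the sum has the roles of the two hypotheses inverted. For \emph{small} $k$ (e.g.\ $k=1$) the factor $(k/n)^{(h-1-2\hat\alpha)k}$ is the \emph{only} source of decay, and it is exactly the hypothesis $\hat\alpha_u<(h-1)/2$ that makes its exponent positive; you claim instead that "for small $s$ one does not get the $(s/n)$ gain." Conversely, for $k$ of order $un$ the ratio $k/n$ is the \emph{constant} $u$, so $(k/n)^{(h-1-2\hat\alpha)k}=u^{(h-1-2\hat\alpha)k}$ decays only at a fixed exponential rate and must out-compete the exponentially growing factor $(eh/\hat\alpha)^{2\hat\alpha k}$ together with the $\binom{n}{k}\le(e/u)^k$ entropy term; this is precisely what the condition $(e/ux)^{2hx}<(1/u)^{h-1}$ (via monotonicity of $y\mapsto(e/yu)^{2hy}$ and $\hat\alpha_u<hx$) guarantees, and it is \emph{not} "easily" won as you assert for large $s$. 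The paper sidesteps your ad hoc split at $s=\log n$ by invoking unimodality of $f$ on $[1,n/2]$, so that only the three checkpoints $f(1)$, $f(\ell)$ for a suitable constant $\ell$, and $f(un)$ need to be verified.

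Second, the core per-set estimate is not actually established in your proposal, and the route you sketch for it is unsound: the conditional probability that $e_{t+1}$ avoids crossing a prescribed mini-vertex set is governed by $\vol_{T_t}(\cdot)$, and you propose to bound this volume by roughly the cardinality of the set using "crude concentration" of degrees. That fails in preferential attachment, where the maximum degree is polynomial in $n$, so $\vol(A)$ can vastly exceed $2|A|$. The correct argument (Lemma~2 of the cited paper, re-proved for $G_n^h$ in the appendix here) conditions on the crossing-edge set being \emph{exactly} $A$ at every time step; under that conditioning the volumes $\vol_{T_{t-1}}(S^{[m]})$ and $\vol_{T_{t-1}}(\bar S^{[m]})$ are determined combinatorially by the counts of internal and crossing edges so far, no degree concentration is needed, and a telescoping product yields the clean bound $\binom{hk}{|A|}/\binom{hn-|A|}{hk-|A|}$. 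You flag this step as the main obstacle and point to the MPS coupling as a fallback, which is the right instinct, but as written the estimate that everything else rests on is missing, and the extension of it from $\tilde G_n^h$ to $G_n^h$ (different normalisations $2t+1$ vs $2t-1$ and loop conventions) is exactly the nontrivial piece the paper supplies in Appendix~A rather than something absorbed into $1+o(1)$ factors.
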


\begin{corollary} \label{cor:exp_linear_in_h}
	Let $h \geq 2
    $ and $G \sim G_n^h$ or $G \sim \tilde{G}_n^h$. Then there exists a constant $c>0$ such that 
	\[
	\pr\big(\;\alpha(G) \geq 0.03418 \, h \;\big) \geq 1-o(n^{-c}).
	\]
\end{corollary}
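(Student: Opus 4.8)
The plan is to read Corollary~\ref{cor:exp_linear_in_h} off Theorem~\ref{thm:our_u_expansion_fussy} by making a good choice of the two free parameters $u$ and $x$. Since $\alpha(G)=\alpha_{1/2}(G)$, take $u=1/2$. Then the hypothesis $(e/ux)^{2hx}<(1/u)^{h-1}$ becomes $(2e/x)^{2hx}<2^{h-1}$, and after taking logarithms this reads $g(x) := x\ln(2e/x) < \tfrac{h-1}{2h}\ln 2$. So the whole task reduces to: for every $h\ge 2$, produce some $x\in(0,1]$ with both $g(x)<\tfrac{h-1}{2h}\ln 2$ and $hx>0.03418\,h$, and check that $0.03418\,h<(h-1)/2$; Theorem~\ref{thm:our_u_expansion_fussy} then applies with $\hat\alpha_u=0.03418\,h$ and delivers the constant $c>0$.

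The auxiliary function $g$ is easy to control: $g'(x)=\ln(2/x)>0$ on $(0,1]$, so $g$ is strictly increasing there, with $g(0^+)=0$ and $g(1)=1+\ln 2$. Since $\tfrac{h-1}{2h}\ln 2<\tfrac12\ln 2<g(1)$ for all $h$, there is a unique $x_0(h)\in(0,1)$ with $g(x_0(h))=\tfrac{h-1}{2h}\ln 2$, and every $x<x_0(h)$ is admissible. As $h$ increases the threshold $\tfrac{h-1}{2h}\ln 2=\tfrac{\ln 2}{2}(1-\tfrac1h)$ increases, and $g$ is increasing, so $x_0(h)$ is increasing in $h$; hence the tightest case is $h=2$, where the threshold is $\tfrac14\ln 2\approx 0.17329$. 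A short numerical check gives $g(0.03418)=0.03418\cdot\ln(2e/0.03418)\approx 0.17327<\tfrac14\ln 2$, so $0.03418<x_0(2)\le x_0(h)$ for all $h\ge 2$. One may therefore pick $x$ with $0.03418<x<x_0(h)$, which keeps the logarithmic inequality valid while ensuring $hx>0.03418\,h$; moreover $0.03418\,h<(h-1)/2$ is equivalent to $0.93164\,h>1$, true for $h\ge2$. Feeding this choice into Theorem~\ref{thm:our_u_expansion_fussy} yields $\pr(\alpha(G)\ge0.03418\,h)=1-o(n^{-c})$.

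There is no genuine obstacle here beyond bookkeeping — the real content lives in Theorem~\ref{thm:our_u_expansion_fussy}, and the corollary is just an optimisation over its parameters. The one point that needs a little care is the gap between the strict inequality $\hat\alpha_u<\min\{(h-1)/2,hx\}$ in the theorem and the non-strict bound $\alpha(G)\ge0.03418\,h$ claimed in the corollary: this is precisely why the constant is taken to be $0.03418$ rather than the exact solution $x_0(2)$ of $g(x)=\tfrac14\ln 2$ — being strictly below $x_0(2)$ leaves a sliver of room to take $x>0.03418$, hence $hx>0.03418\,h$ strictly. One should also record that $x_0(h)<1$ throughout, so the constraint $x\le 1$ is automatic, and that $u|V|=n/2\ge1$ for $n\ge2$, so $\alpha_{1/2}(G)$ is not vacuously $\infty$.
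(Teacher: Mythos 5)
Your proposal is correct and is essentially the paper's own argument: set $u=1/2$ in Theorem~\ref{thm:our_u_expansion_fussy}, reduce the verification of $(2e/x)^{2hx}<2^{h-1}$ for all $h\geq 2$ to the single case $h=2$ (you do this via monotonicity of $\tfrac{h-1}{2h}\ln 2$ in $h$ after taking logarithms; the paper equivalently writes $(2e/\eta)^{2h\eta}=\bigl((2e/\eta)^{4\eta}\bigr)^{h/2}<2^{h/2}\leq 2^{h-1}$), and finish with the same numerical check that $0.03418\cdot\ln(2e/0.03418)<\tfrac14\ln 2$, i.e.\ $(2e/0.03418)^{4\cdot 0.03418}<2$. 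Your explicit handling of the strict inequality $\hat\alpha_u<hx$ (choosing $x$ slightly above $0.03418$, which the slack in the numerical check permits) is a small point the paper's proof leaves implicit, and is a welcome extra bit of care rather than a deviation in method.
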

Therefore in the models we consider, for each $h\ge 2$, whp for each non-empty vertex subset $S$ with $|S|\le n/2$ we have that $e(S,\bar S)\ge |S|\,0.03418 \, h$ -- see Definition~\ref{def:uexpansion}.

Corollary~\ref{cor:exp_linear_in_h} gives a lower bound on expansion which is linear in $h$, thus improving the journal version of the result by~\cite{mihail2006journal}. Note that up to constants we obtain the bound which was claimed in Theorem 2.1 of~\cite{mihail2003conf}, 
but which was not proven. The claimed bound was that $\alpha(G)\geq \eta $ whp for any $\eta<h/2-3/4$.

The proofs of Theorem~\ref{thm:our_u_expansion_fussy} and Corollary~\ref{cor:exp_linear_in_h} are given in Section~\ref{sec:proofs_exp}.

\subsection{Modularity of the preferential attachment models.}\label{sec:modularity}

Prokhorenkova, Pra\l at and Raigorodskii were the first to establish 
bounds for the modularity of preferential attachment graphs. %
In~\cite{prokhorenkova2017modularity_internet_Math} they showed a whp~$\Omega(h^{-1/2})$ lower bound for $\q(G_n^h)$. 
Later, Agdur, Kam\v{c}ev, and the third author showed that this lower bound holds also for a generalised model of preferential attachment graphs using only likely properties of the graphs degree sequence~\cite{mod2023universal}.

In this paper we investigate upper bounds. The authors of~\cite{prokhorenkova2017modularity_internet_Math} get their upper bound on $q^*(\tilde{G}_n^h)$ by the following proposition -- see also Remark~\ref{rem:result_really_is}.

\begin{proposition} [\cite{prokhorenkova2017modularity_internet_Math}]\label{prop:PPR} 	
Suppose a graph $G$ has edge expansion $\alpha(G)$. Suppose also that $G$ has minimum degree $h \geq 1$ and average degree at most $2h$. Then
\begin{equation}\label{eqn:internet_upperbound}
	\q(G) \leq 1 - \min\left\{\alpha(G)/2h, 1/16\right\}.
\end{equation}
\end{proposition}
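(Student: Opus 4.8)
The plan is to bound $q_{\mathcal{A}}(G)$ for an arbitrary vertex partition $\mathcal{A}$ of $G$ and then take the maximum over $\mathcal{A}$. Writing $n=|V|$, $m=e(G)$, and $c_S=e_G(S,\bar S)$ for a part $S$, the first step is the standard identity
\[
1-q_{\mathcal{A}}(G)=\frac{1}{2m}\sum_{S\in\mathcal{A}}c_S+\sum_{S\in\mathcal{A}}\left(\frac{\vol(S)}{\vol(G)}\right)^2,
\]
which one gets from $e_G(S)=(\vol(S)-c_S)/2$, from $\sum_S\vol(S)=\vol(G)=2m$, and from the fact that $\sum_S c_S$ counts every cut edge exactly twice. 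Both summands on the right are nonnegative, so it suffices to show that in every situation at least one of them is at least $\min\{\alpha(G)/2h,\,1/16\}$. The two degree hypotheses enter only through the translations $\vol(G)\le 2hn$ (average degree at most $2h$) and $\vol(S)\ge h|S|$ for every $S\subseteq V$ (minimum degree at least $h$).

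Next I would split into two cases according to the largest part. If every part has $|S|\le n/2$, then the definition of $\alpha(G)=\alpha_{1/2}(G)$ gives $c_S\ge\alpha(G)|S|$ for each $S\in\mathcal{A}$, so $\sum_S c_S\ge\alpha(G)\sum_S|S|=\alpha(G)n$ and hence $1-q_{\mathcal{A}}(G)\ge \alpha(G)n/(2m)\ge\alpha(G)/(2h)$. If instead some (necessarily unique) part $A^*$ has $|A^*|>n/2$, then either $\mathcal{A}=\{V\}$, in which case $q_{\mathcal{A}}(G)=0$ and there is nothing to prove, or $\vol(A^*)\ge h|A^*|>hn/2$ while $\vol(G)\le 2hn$, so $\vol(A^*)/\vol(G)>1/4$ and the degree-tax term alone already gives $1-q_{\mathcal{A}}(G)>1/16$. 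In both cases $q_{\mathcal{A}}(G)\le 1-\min\{\alpha(G)/2h,\,1/16\}$, and maximising over $\mathcal{A}$ yields $\q(G)\le 1-\min\{\alpha(G)/2h,\,1/16\}$.

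There is no serious obstacle here beyond bookkeeping: the argument is essentially forced once one notices that $\alpha_{1/2}$ controls $e_G(S,\bar S)/|S|$ precisely for $|S|\le n/2$, so the case split has to occur at $n/2$, where at most one part can lie on the ``large'' side. The one genuine subtlety worth flagging is that edge expansion is a count of \emph{vertices} whereas the degree tax is measured in \emph{volume}, and the minimum- and average-degree assumptions are exactly what is needed to pass between the two (the average-degree bound in the small-parts case, and both bounds in the large-part case). Finally I would check that the degenerate instances are dispatched cleanly — the trivial partition $\{V\}$, and graphs with $|V|$ so small that $\alpha(G)=\infty$ — since in all of these $q_{\mathcal{A}}(G)=0$ and the stated bound holds trivially.
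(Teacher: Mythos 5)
Your proof is correct, and the stated bound with $1/16$ follows exactly as you argue: the identity $1-q_{\cA}(G)=\frac{1}{2m}\sum_{S}e(S,\bar S)+\sum_{S}\bigl(\vol(S)/\vol(G)\bigr)^2$, the translations $m\le hn$ and $\vol(S)\ge h|S|$, and the case split at $n/2$ are all sound, and the degenerate cases are handled. This is the same skeleton the paper uses, but the paper actually proves the sharper Lemma~\ref{lem:us_upper_bound}, with $3/16$ in place of $1/16$, and the difference lies entirely in your large-part case: there you discard the cut-edge term and keep only the degree tax of the big part $A^*$, which gives $(\vol(A^*)/\vol(G))^2>1/16$. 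The paper keeps both terms: writing $|A^*|=an$ with $a\ge 1/2$, the complement has size $(1-a)n\le n/2$, so the expansion applied to $\bar{A^*}$ yields $e(A^*,\bar{A^*})\ge \alpha(G)(1-a)n$ and hence
\[
q_{\cA}(G)\le 1-\frac{\alpha(G)}{h}(1-a)-\frac{a^2}{4},
\]
and minimising the right-hand side over $a\in[1/2,1]$ (the function is monotone there when $\alpha(G)/h\le 1/4$) upgrades the constant to $\frac12\min\{\alpha(G)/h,1/4\}+\frac{1}{16}\ge \min\{\alpha(G)/2h,3/16\}$. So your argument buys simplicity and proves precisely the cited proposition, but gives away the extra cut information around the large part that the paper exploits; if you want the improved constant, reinstate $e(A^*,\bar{A^*})$ via the expansion of the complement and optimise over the size of the large part.
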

We note in the following lemma that $1/16$ above may be replaced by $3/16$ (the proof of the lemma is given in Section~\ref{sec:proofs_mod}).

\begin{lemma}\label{lem:us_upper_bound} Suppose a graph $G$ has edge expansion $\alpha(G)$. 
Suppose also that $G$ has minimum degree $h \geq 1$ and average degree at most $2h$. Then
    \[	
    \q(G) \leq  1 - \min\{\alpha(G)/2h, 3/16\}. 
    \]
\end{lemma}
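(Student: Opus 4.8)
The plan is to bound the modularity score $q_{\cA}(G)$ of an arbitrary vertex partition $\cA$ of $G$ and reduce everything to a one-variable optimization. Write $m = e(G)$ and recall $\vol(G) = 2m$; the hypotheses (minimum degree $h$ and average degree at most $2h$) give $hn \le 2m \le 2hn$, so in particular $n \ge m/h$ and $2m \le 2hn$. Set $e_{\mathrm{cross}} = \tfrac12 \sum_{S \in \cA} e(S,\bar S)$, the number of edges joining different parts, and $D = \sum_{S \in \cA} (\vol(S)/2m)^2$, the ``degree tax''. Since $\sum_{S \in \cA} e(S) = m - e_{\mathrm{cross}}$ one has the exact identity $q_{\cA}(G) = 1 - e_{\mathrm{cross}}/m - D$, so it suffices to prove $e_{\mathrm{cross}}/m + D \ge \min\{\alpha(G)/2h,\, 3/16\}$ for every partition $\cA$; abbreviate $\alpha = \alpha(G)$.

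We split according to whether some part of $\cA$ is large. If every part $S$ has $|S| \le n/2$, then $e(S,\bar S) \ge \alpha |S|$ for each part by the definition of $\alpha(G) = \alpha_{1/2}(G)$, so $2 e_{\mathrm{cross}} = \sum_{S} e(S,\bar S) \ge \alpha \sum_{S} |S| = \alpha n$; hence $e_{\mathrm{cross}}/m \ge \alpha n / 2m \ge \alpha/2h$ and we are done, without even using $D$. Otherwise there is a unique part $A$ with $|A| > n/2$. If $A = V$ then $q_{\cA} = 0$ and we are done, so assume $\bar A \ne \emptyset$ and set $\beta = |A|/n \in (1/2, 1)$. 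Then $1 \le |\bar A| < n/2$, so the expansion bound applied to $\bar A$ gives $e(A,\bar A) = e(\bar A, A) \ge \alpha |\bar A|$, and every part $S \ne A$ also satisfies $|S| \le |\bar A| < n/2$, so $\sum_{S \ne A} e(S,\bar S) \ge \alpha \sum_{S \ne A} |S| = \alpha |\bar A|$. Adding these, $2 e_{\mathrm{cross}} = e(A,\bar A) + \sum_{S \ne A} e(S,\bar S) \ge 2\alpha |\bar A| = 2\alpha(1-\beta)n$, so $e_{\mathrm{cross}}/m \ge \alpha(1-\beta)n/m \ge \alpha(1-\beta)/h$. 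For the degree tax, dropping all terms but that of $A$ and using $\vol(A) \ge h|A|$ together with $2m \le 2hn$ gives $\vol(A)/2m \ge \beta/2$, hence $D \ge \beta^2/4$. Therefore $e_{\mathrm{cross}}/m + D \ge g(\beta) := \alpha(1-\beta)/h + \beta^2/4$.

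It then remains to verify the elementary inequality $g(\beta) \ge \min\{\alpha/2h,\, 3/16\}$ for all $\beta \in (1/2, 1)$. If $\alpha/2h \le 3/16$, then (using $2\beta - 1 > 0$) $g(\beta) - \alpha/2h = \beta^2/4 - (\alpha/2h)(2\beta - 1) \ge \beta^2/4 - \tfrac{3}{16}(2\beta-1) = \tfrac{1}{16}(4\beta^2 - 6\beta + 3) > 0$, the last step because $4\beta^2 - 6\beta + 3$ has negative discriminant. If instead $\alpha/2h > 3/16$, then $g(\beta) \ge \tfrac38(1-\beta) + \beta^2/4$, a convex function of $\beta$ whose minimum over $[1/2,1]$ is attained at $\beta = 3/4$ with value $15/64 > 3/16$. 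In both cases $e_{\mathrm{cross}}/m + D \ge \min\{\alpha/2h, 3/16\}$, which completes the proof.

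The calculations above are routine; the only genuinely new ingredient compared with Proposition~\ref{prop:PPR} is the handling of the large part $A$. Using only its degree tax recovers the weaker constant $1/16$ appearing there, so the main point is to additionally observe that $\bar A$ has size at most $n/2$ and hence also contributes an expansion-type lower bound to $e_{\mathrm{cross}}$, and then to balance the two contributions through the optimization over $\beta = |A|/n$. That optimization is the only delicate step, and it is where the precise constant $3/16$ comes from.
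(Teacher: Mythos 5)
Your proof is correct and follows essentially the same route as the paper's: split on whether some part exceeds $n/2$ vertices, use expansion of all (small) parts to get $1-\alpha/2h$ in one case, and in the other combine the degree tax $\beta^2/4$ of the large part with the expansion bound $\alpha(1-\beta)/h$ coming from its complement, then optimize over $\beta$. The only difference is cosmetic: you settle the final one-variable inequality via a discriminant computation and convexity at $\beta=3/4$ (value $15/64$), where the paper caps $\alpha/h$ at $1/4$ and uses monotonicity of the quadratic on $[1/2,1]$.
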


\begin{remark}\label{rem:result_really_is}
The authors of~\cite{prokhorenkova2017modularity_internet_Math} state that for $G \sim \tilde{G}_n^h$ with $h \geq 3$ whp $q^*(G) \leq 15/16 \approx 0.9375$ and propose a similar bound for $h=2$.
These results, however, were obtained using~\eqref{eqn:internet_upperbound} and some claimed but not proven expansion properties of $\tilde{G}_n^h$ from~\cite{mihail2003conf}. If, instead, one uses the proven expansion properties given in~\cite{mihail2006journal} (Theorem~\ref{thm:mihail_expansion}), the result for $G \sim \tilde{G}_n^h$ becomes whp $q^*(G) \leq 1-1/10h$, thus the upper bound tends to $1$ with $h$ tending to infinity.

Note that applying~\eqref{eqn:internet_upperbound} and our Corollary~\ref{cor:exp_linear_in_h} instead of Theorem~\ref{thm:mihail_expansion} gives for $G \sim \tilde{G}_n^h$ whp $\q(G)\leq 1-0.03418/2 = 0.98291$ thus whp bounds the modularity value of~$G$ away from~$1$ for all $h \geq 2$ (since the lower bound for the edge expansion in Corollary \ref{cor:exp_linear_in_h} is whp \emph{linear} in $h$).
\end{remark}

In this paper we obtain a sharper upper bound for the modularity value, this time generalized to both models, i.e., for $G\sim G^h_n$ and $G\sim \tilde{G}^h_n$. 
We use Theorem~\ref{thm:our_u_expansion_fussy} (which lower bounds the expansion of sets as a function of their size), and consider expansion of small vertex sets, 
to get some general upper bound on the modularity value in the graphs with good expansion properties (Theorem~\ref{thm:general_upper_bound_mod_2}). 

\begin{theorem}\label{thm:general_upper_bound_mod_2}
Let $0<u\leq 1/2$, and $G=(V,E)$ be a graph with $u$-expansion $\alpha_u(G)$, minimum degree $h \geq 1$, and such that $e(S) \leq h |S|$ for all $S \subseteq V$. 
Define   
$\delta_u:= \min\{\alpha_u(G)/h,1\}$. Then
	\[
	q^*(G) \leq 1- \inf_{0 < u \leq 1/2} \left(\frac{\delta_u}{2+\delta_u} + \frac{u}{2}\right).
	\]
\end{theorem}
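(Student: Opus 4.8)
The plan is to bound the modularity score $q_\cA(G)$ of an arbitrary vertex partition $\cA$ and then maximise over $\cA$. Writing $e_c=e_c(\cA)$ for the number of edges of $G$ joining two different parts of $\cA$, the identity $e(G)=e_c+\sum_{S\in\cA}e(S)$ lets us rewrite the modularity score as
\[
q_\cA(G)=1-\frac{e_c}{e(G)}-\sum_{S\in\cA}\Bigl(\frac{\vol(S)}{\vol(G)}\Bigr)^{2}.
\]
Since $\frac{\delta_u}{2+\delta_u}+\frac u2\ge\inf_{0<u'\le 1/2}\bigl(\frac{\delta_{u'}}{2+\delta_{u'}}+\frac{u'}{2}\bigr)$ for every $u$, it suffices to exhibit, for each $\cA$, some $u=u(\cA)\in(0,1/2]$ for which $q_\cA(G)\le 1-\frac{\delta_u}{2+\delta_u}-\frac u2$; the trivial partition (where $q_\cA(G)=0$) and the degenerate case $uN<1$ are dealt with separately.

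Fix $u$ and split $\cA$ into \emph{small} parts ($|S|\le uN$) and \emph{big} parts; let $V_b$ be the union of the big parts and set $V:=\vol(V_b)/\vol(G)$. For the cut term, note that for a small part $S$ expansion gives $e(S,\bar S)\ge\alpha_u(G)|S|$, while $e(S)\le h|S|$ and $\delta_u\le\alpha_u(G)/h$ give $\alpha_u(G)|S|\ge\delta_u e(S)$; combined with $\vol(S)=2e(S)+e(S,\bar S)$ this yields $e(S,\bar S)\ge\frac{\delta_u}{2+\delta_u}\vol(S)$. Summing over the small parts, where each cut edge is counted at most twice, and using $\vol(G)=2e(G)$, one gets $\tfrac{e_c}{e(G)}\ge\frac{\delta_u}{2+\delta_u}(1-V)$. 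For the degree‑tax term, keep only the big parts, and in particular the largest part $S^\star$: using $\vol(S^\star)\ge h|S^\star|$ and $\vol(G)=2e(G)\le 2hN$ (this last from $e(V)\le h|V|$), we have $\sum_{S\in\cA}(\vol(S)/\vol(G))^2\ge\sum_{S\text{ big}}(\vol(S)/\vol(G))^2\ge(|S^\star|/2N)^2$.

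Combining the two estimates, $q_\cA(G)\le 1-\frac{\delta_u}{2+\delta_u}(1-V)-\sum_{S\text{ big}}(\vol(S)/\vol(G))^2$, so the proof is reduced to choosing $u=u(\cA)$ so that the big‑part degree tax exceeds $\frac u2+\frac{\delta_u}{2+\delta_u}V$. The idea is to take $u$ comparable to $(|S^\star|/N)^2$: this makes $S^\star$ a big part and forces $(\vol(S^\star)/\vol(G))^2\ge\frac u2$, while the residual $\frac{\delta_u}{2+\delta_u}V$ is absorbed by the remaining big‑part degree tax, using $\vol(S)\ge 2e(S)$ — equivalently $\tfrac{e(S)}{e(G)}\le\vol(S)/\vol(G)$ — so that a big part with many internal edges automatically contributes a proportional amount to its own degree‑tax term. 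When $\cA$ has no part that is large at the scale on which $G$ still expands, one instead takes $u$ to be the relative size of the largest part, every part is small, and the cut estimate alone already gives a bound of the required shape. Maximising over $\cA$ finishes the proof.

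The step I expect to be the main obstacle is precisely this balancing: producing a single threshold $u=u(\cA)$ whose big‑part degree tax dominates both $\frac u2$ and the correction $\frac{\delta_u}{2+\delta_u}V$ at once. Here the hypothesis $e(S)\le h|S|$ and the elementary bound $\vol(S)\ge 2e(S)$ are both used in an essential way, and it is crucial that $u$ is allowed to depend on $\cA$ (which is why the conclusion is an infimum over $u$ rather than a statement for one fixed value); this most likely forces the argument into a small case analysis according to the size of the largest part.
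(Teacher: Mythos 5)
Your argument does not close, and the gap is exactly the step you flag as the main obstacle: producing a single threshold $u=u(\cA)$ whose big-part degree tax dominates $\frac u2+\frac{\delta_u}{2+\delta_u}V$. This balancing inequality can fail. Consider a partition with one part $S_1$ of size $n/2$ consisting of minimum-degree vertices, in a graph of average degree $2h$ with $\delta_u$ close to $1$ for all $u$, all remaining parts being singletons. Then $V=\vol(S_1)/\vol(G)$ can be as small as $\tfrac14$ and the big-part degree tax equals $\tfrac1{16}$, while for every $u<1/2$ your requirement reads $\tfrac1{16}\ge\frac u2+\frac{\delta_u}{2+\delta_u}\cdot\tfrac14\ge\tfrac1{12}$, which is false; and there is no ``remaining big-part degree tax'' to absorb anything, since $S_1$ is the only big part. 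Taking $u=1/2$ instead makes $S_1$ small, but then your cut estimate yields only $\frac{\delta_{1/2}}{2+\delta_{1/2}}\le\tfrac13$ and you still need the degree tax to supply $\frac u2=\tfrac14$, which $\tfrac1{16}$ does not. So for this partition your two estimates sum to at most $\tfrac14+\tfrac1{16}=\tfrac5{16}$, short of the required $\tfrac13$. (Whether a graph attains all these extremes simultaneously is immaterial; the point is that your chain of inequalities does not reach the claimed bound.)

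The paper's proof avoids global balancing entirely. It writes $1-q_\cA(G)=\sum_{S\in\cA}\frac{\vol(S)}{\vol(G)}q_{vr}^-(S)$ with $q_{vr}^-(S)=\frac{e(S,\bar S)}{\vol(S)}+\frac{\vol(S)}{\vol(G)}$ (Proposition~\ref{prop.bound_by_worst_set_vertex}): the volume-weighted average converts the quadratic degree tax into the \emph{linear} term $\frac{\vol(S)}{\vol(G)}\ge\frac{|S|}{2n}$, and each part is then treated with its own $u=|S|/n$ --- the infimum over $u$ in the statement records this per-part choice, not a per-partition one. Your inequality $e(S,\bar S)\ge\frac{\delta_u}{2+\delta_u}\vol(S)$ is exactly the paper's bound on the first summand, so that piece is sound; what is missing is the reweighting that linearises the second summand. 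A further omission: a part with $|S|>n/2$ lies outside the range of the expansion hypothesis, and the paper handles it by applying expansion to $\bar S$ and checking (Lemma~\ref{lem:dull}) that the resulting lower bound on $q_{vr}^-(S)$ is no worse than the one already obtained for sets of size at most $n/2$.
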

This result is of a similar shape to the upper bound for $r$-regular graphs from~\cite{treelike}, where it is shown that
    \[
	q^*(G) \leq 1- \inf_{0 < u \leq 1/2} \left( \frac{\alpha_u(G)}{r} + u \right)
	\]
(and the related result in~\cite{prokhorenkova2017modularity}). The $u/2$ term in Theorem~\ref{thm:general_upper_bound_mod_2} occurs analogously to the $u$ term above, where the factor of a half comes into this term in the theorem since there is a factor of two between the minimum and average degrees instead of all degrees being equal in the regular graph.

After establishing Theorem~\ref{thm:general_upper_bound_mod_2} and the expansion results given in Section~\ref{sec:expansion}, we may obtain our modularity upper bound. The proofs of Theorem~\ref{thm:general_upper_bound_mod_2} and Corollary~\ref{cor.q_upper_bound} are given in Section~\ref{sec:proofs_mod}. 
\begin{corollary}\label{cor.q_upper_bound}
	Let $h\geq 2$ and let $G \sim G^h_n$ or $G \sim \tilde{G}^h_n$. Then whp
	\[ \q(G) \leq 0.92383.
	\] 
\end{corollary}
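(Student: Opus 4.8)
The plan is to combine the expansion lower bound from Theorem~\ref{thm:our_u_expansion_fussy} with the general modularity bound of Theorem~\ref{thm:general_upper_bound_mod_2}. First I would check that $G \sim G_n^h$ or $G \sim \tilde{G}_n^h$ whp satisfies the deterministic hypotheses of Theorem~\ref{thm:general_upper_bound_mod_2}: minimum degree $h \ge 2 \ge 1$ is immediate (each merged vertex receives $h$ mini-vertices, each contributing at least one edge-endpoint), and the condition $e(S) \le h|S|$ for all $S$ follows because each vertex $i$ contributes exactly $h$ new edges when it is added (in $G_n^h$) or at most $h$ new edges whose other endpoint is an earlier vertex, so in any vertex set $S$ the inner edges can be charged to the highest-indexed endpoint, giving at most $h$ per vertex of $S$; the special loop $e_1$ in $\tilde G_n^h$ only helps. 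So both models almost surely (in fact deterministically) meet the structural conditions, leaving only the expansion term to estimate.

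Next I would invoke Theorem~\ref{thm:our_u_expansion_fussy}: for each fixed $u \in (0,1/2]$ we may choose $x = x(u) \in (0,1]$ with $(e/(ux))^{2hx} < (1/u)^{h-1}$ and then obtain $\alpha_u(G) \ge \hat\alpha_u$ whp for any $\hat\alpha_u < \min\{(h-1)/2, hx\}$. Since we want a bound independent of $h$ and valid for all $h \ge 2$, I would look at the worst case $h = 2$ (for $h=2$ the term $(h-1)/2 = 1/2$ and $hx = 2x$; the constraint becomes $(e/(ux))^{4x} < 1/u$), and verify that the constant $0.03418$ from Corollary~\ref{cor:exp_linear_in_h} — or a $u$-dependent refinement of it — makes $\delta_u = \min\{\alpha_u(G)/h, 1\}$ at least some explicit function $\delta(u)$. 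Plugging into Theorem~\ref{thm:general_upper_bound_mod_2} gives whp
\[
  q^*(G) \le 1 - \inf_{0<u\le 1/2}\left(\frac{\delta(u)}{2+\delta(u)} + \frac{u}{2}\right),
\]
and the task reduces to a one-variable optimization: the map $u \mapsto \delta(u)/(2+\delta(u))$ is increasing in $\delta(u)$ (which itself tends to grow with $u$, since larger sets are easier to certify expansion for via the exponent constraint), while $u/2$ is increasing in $u$, so one balances a sufficiently large $u$ against the penalty $u/2$. A careful numerical choice of $u$ (and the associated $x$) should yield the displayed constant $0.92383$.

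The main obstacle will be the numerical bookkeeping in the expansion step: Theorem~\ref{thm:our_u_expansion_fussy} only yields a useful lower bound on $\alpha_u(G)$ once $x$ is chosen to (near-)optimally satisfy $(e/(ux))^{2hx} < (1/u)^{h-1}$, and this constraint is most binding at $h=2$, so one must argue the chosen $u$ works uniformly in $h\ge 2$. I would handle this by showing monotonicity: for fixed $u$ and $x$ satisfying the $h=2$ constraint, the inequality $(e/(ux))^{2hx} < (1/u)^{h-1}$ continues to hold for all larger $h$ (taking logs, one compares $2hx\ln(e/(ux))$ with $(h-1)\ln(1/u)$, and for fixed $x$ small enough the right side grows faster in $h$), and correspondingly $\min\{(h-1)/2, hx\}/h$ is bounded below for all $h\ge 2$ by its $h=2$ value $\min\{1/4, x\}$; thus $\delta_u$ is uniformly bounded below. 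With that uniformity in hand, the remaining step is to evaluate $\inf_u(\delta(u)/(2+\delta(u)) + u/2)$ numerically and confirm it is at least $1 - 0.92383 = 0.07617$, which is a routine (if slightly delicate) optimization.
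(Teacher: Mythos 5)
Your plan follows the paper's route almost exactly: verify the deterministic hypotheses of Theorem~\ref{thm:general_upper_bound_mod_2} (minimum degree $h$ via the $h$ mini-vertices per vertex, and $e(S)\le h|S|$ by charging each inner edge to the mini-vertex that introduced it), reduce the exponent constraint in Theorem~\ref{thm:our_u_expansion_fussy} to the case $h=2$ via the monotonicity $(e/(u\eta))^{2h\eta}=\bigl((e/(u\eta))^{4\eta}\bigr)^{h/2}<(1/u)^{h/2}\le(1/u)^{h-1}$, and then optimize over $u$ numerically. Two points need fixing before the numerics constitute a proof. First, your parenthetical claim that $\delta(u)$ ``tends to grow with $u$'' is backwards: $\alpha_u(G)$ is a minimum over a larger family of sets as $u$ increases, so both $\alpha_u(G)$ and the certified lower bound $\hat\delta_u$ extractable from the constraint $(e/(ux))^{2hx}<(1/u)^{h-1}$ are non-increasing in $u$ (for $u=1/2$ one only gets $x\approx 0.034$, while for small $u$ one can push $x$ towards $1/4$). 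The genuine trade-off is between a large expansion term at small $u$ and a large $u/2$ term at large $u$; the binding value turns out to be $u\approx 0.0142$ with $\hat\delta_u\approx 0.14851$.

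Second, and more substantively, one cannot ``evaluate $\inf_{0<u\le 1/2}$ numerically'': the infimum ranges over a continuum and must be lower-bounded at \emph{every} $u$, not just at sampled grid points. The paper closes this gap using exactly the monotonicity above: for a grid $0=u_0\le u_1\le\cdots\le u_t=1/2$ and any $u\in(u_{s-1},u_s]$ one has $\delta_u\ge\delta_{u_s}$ and $u\ge u_{s-1}$, hence $\inf_{0<u\le 1/2}\bigl(\delta_u/(2+\delta_u)+u/2\bigr)\ge\min_{1\le s\le t}\bigl(\delta_{u_s}/(2+\delta_{u_s})+u_{s-1}/2\bigr)$, which is a finite check (step size $0.0001$ yields the constant $0.92383$). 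You also need a union bound over the finitely many $u_s$ so that the whp expansion bounds hold simultaneously. With that bracketing step added, your argument coincides with the paper's.
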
 
We note this is the first result for $G_n^h$ and $\tilde{G}^h_n$ which bounds the modularity value strictly away from~1 whp for each $h\geq 2$.

Recall the following nice conjecture of \cite{prokhorenkova2017modularity_internet_Math}.

\begin{conjecture}
	Let $h\geq 2$ and let $G \sim G^h_n$. Then whp
	\[ \q(G) = \Theta(h^{-1/2}). \]     
\end{conjecture}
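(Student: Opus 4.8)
The statement has two halves, and they are of very different difficulty. The lower bound $\q(G_n^h)=\Omega(h^{-1/2})$ whp is already available: it is the bound of Prokhorenkova, Pra\l at and Raigorodskii~\cite{prokhorenkova2017modularity_internet_Math}, reproved from degree-sequence properties alone in~\cite{mod2023universal}, and is obtained by exhibiting a single suitable partition (for instance grouping vertices into blocks by age) whose surplus of internal edges over the null term is of order $h^{-1/2}$. So the real content of the conjecture is the matching upper bound $\q(G_n^h)=O(h^{-1/2})$ whp, which would sharpen the rate of Theorem~\ref{thm.sqrth} to the conjectured one. My plan is to establish the analogue for $G_n^h$ of the Erd\H{o}s--R\'enyi phenomenon $\q(G(n,p))=\Theta(1/\sqrt d)$ with $d=np$, here with average degree $d=2h$; note that the expansion-based bound of Theorem~\ref{thm:general_upper_bound_mod_2} can only give a constant upper bound and is therefore structurally too weak for this.

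For the upper bound I would pass to the density-excess form of modularity. Writing $m=e(G)=hn$ and using $\vol(G)=2m$, every partition $\cA$ satisfies
\[
q_\cA(G)=\frac1m\sum_{S\in\cA}\Big(e(S)-\frac{\vol(S)^2}{4m}\Big),
\]
so it suffices to prove that whp
\[
\max_{\cA}\ \sum_{S\in\cA}\Big(e(S)-\frac{\vol(S)^2}{4m}\Big)\ \le\ C\,n\sqrt h ,
\]
since this yields $\q(G)\le Cn\sqrt h/m=C/\sqrt h$. The engine is a uniform subset-density lemma: whp \emph{every} $S\subseteq V$ obeys $e(S)\le \vol(S)^2/(4m)+\Phi(S)$, with $\Phi(S)$ matching the typical fluctuation of the internal edge count. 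Granting this, one sums over the parts of an arbitrary partition and optimises $\sum_S\Phi(S)$ subject to $\sum_S|S|=n$ and $\sum_S\vol(S)=2m$; as in the $G(n,p)$ calculation the small parts contribute little because $e(S)$ is itself small, while the null term $\vol(S)^2/(4m)$ tames the large parts, and the crossover between these regimes produces exactly the $\sqrt h$ scaling.

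The hard part is the subset-density lemma, because the edges of $G_n^h$ are strongly dependent and the degree sequence is heavy-tailed, with hubs of degree of order $h\sqrt n$ that can dominate both $\vol(S)$ and $e(S)$. I would split the work in two. First, isolate the hubs: the highest-degree vertices (a set of polylogarithmic size) are removed and their bounded contribution to any $e(S)$ is controlled directly via the known maximal-degree and degree-sequence estimates for $G_n^h$. Second, for the bulk I would exploit the P\'olya-urn / attractiveness-weight representation of preferential attachment, under which, conditionally on the weights, the attachment choices become essentially independent; this restores the Bernstein/Chernoff concentration of $e(S)$ about $\vol(S)^2/(4m)$ needed to beat the $\binom{n}{|S|}\le(en/|S|)^{|S|}$ union bound over subsets. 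The main obstacle is controlling the conditioning and the weight fluctuations sharply enough that the surplus is genuinely $O(n\sqrt h)$ rather than $O(n\sqrt h\,\mathrm{polylog})$ — i.e.\ removing the spurious logarithmic factor that a naive union bound introduces, exactly as in the $G(n,p)$ case. A direct martingale (Freedman) analysis of the edge-by-edge growth of $T_{hn}$, using that each step adds only $h$ edges, is the natural fallback if the urn conditioning proves unwieldy. The same argument should apply to $\tilde G_n^h$, whose structure differs only in the treatment of the single loop $e_1$.
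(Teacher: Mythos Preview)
The statement you are attempting to prove is labelled a \emph{Conjecture} in the paper, and the paper offers no proof of it. Immediately after stating the conjecture the authors note that it has been confirmed only up to logarithmic factors (Theorem~\ref{thm.sqrth}, from~\cite{rybarczyk2025modularity}), i.e.\ one currently has whp $\q(G_n^h)\le f(h)h^{-1/2}$ with $f(h)=3\sqrt{2\ln 2}\,\sqrt{\ln h}(1+o(1))$. There is therefore no ``paper's own proof'' to compare your attempt against.

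Your submission is a research outline, not a proof, and you yourself locate the gap accurately: the whole difficulty is the uniform subset-density lemma with error $O(n\sqrt h)$ rather than $O(n\sqrt h\cdot\mathrm{polylog})$. Everything you write after ``Granting this'' is routine bookkeeping; everything before it is a list of tools (P\'olya-urn conditioning, hub removal, Bernstein/Freedman inequalities) without a demonstration that they deliver the sharp constant-free bound. In particular, the union bound over $\binom{n}{|S|}$ subsets is exactly what forces a logarithmic loss in the $G(n,p)$ analysis and in the proof of Theorem~\ref{thm.sqrth}; neither the urn representation nor a naive Freedman martingale argument is known to remove that loss, and you do not indicate a mechanism that would. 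So the proposal does not advance beyond the state of the art: the $\Omega(h^{-1/2})$ lower bound is already in~\cite{prokhorenkova2017modularity_internet_Math,mod2023universal}, the $O(h^{-1/2}\sqrt{\ln h})$ upper bound is Theorem~\ref{thm.sqrth}, and closing the $\sqrt{\ln h}$ gap --- the actual content of the conjecture --- remains open.
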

This conjecture was recently confirmed up to logarithmic factors by the second and fourth author in~\cite{rybarczyk2025modularity}. 
\begin{theorem}[Theorem 5 in \cite{rybarczyk2025modularity}]\label{thm.sqrth}
	Let $h\geq 2$ and let $G \sim G^h_n$. Then whp
	\[ \q(G) \leq f(h) \, h^{-1/2}, \] 
	where $f(h) = 3 \sqrt{2 \ln{2}} \sqrt{\ln{h}}(1+o(1))$. 
\end{theorem}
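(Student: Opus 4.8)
The plan is to exploit a ``quasi-randomness'' of $G_n^h$: although the graph is built by preferential attachment, the number of edges internal to any vertex set is close to what a random multigraph with the same degree sequence would produce, and only the small \emph{excess} over this baseline drives the modularity. Throughout I use that for $G\sim G_n^h$ one has exactly $e(G)=hn$ and $\vol(G)=2hn$, since $T_{hn}$ contributes one edge per mini-vertex; in particular $\vol(G)=2e(G)$. For a set $S$ define the random baseline $b(S):=\vol(S)^2/(2\vol(G))$. A direct computation using only $\vol(G)=2e(G)$ gives $(\vol(S)/\vol(G))^2=b(S)/e(G)$, so for every partition $\cA$ the modularity score factors exactly as
\[
q_\cA(G)=\frac{1}{e(G)}\sum_{S\in\cA}\big(e(S)-b(S)\big).
\]
Thus if $e(S)$ equalled its baseline for each part, the coverage term would cancel the degree-tax term exactly; it therefore suffices to upper bound the total signed excess $\sum_{S\in\cA}\big(e(S)-b(S)\big)$ uniformly over all partitions $\cA$.

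The core step is a whp bound on $e(S)-b(S)$ for an arbitrary set $S$, obtained by analysing the edge-insertion process directly. I would reveal the mini-vertices $1,\dots,hn$ one at a time, letting $\mathcal F_t$ be the history after step $t$ and $v(t+1)=\lceil (t+1)/h\rceil$ be the merged vertex containing mini-vertex $t+1$. Writing $S$ also for the set of mini-vertices whose merged vertex lies in a fixed part, the conditional probability that the edge created at step $t+1$ is internal to that part equals $\vol_{T_t}(S\cap[t])/(2t+1)$ when $v(t+1)$ lies in the part. Summing these conditional means gives the predictable part of $e(S)$, which I would compare to $b(S)$; the preferential-attachment ``old-attracts-old'' correlation makes the two differ, and controlling this bias through the known growth of set-volumes in $T_t$ (of order $\sqrt t$) is where the dependence on $h$ enters. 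The fluctuation of $e(S)$ about its predictable part is then handled by a martingale concentration inequality (Freedman/Azuma) with bounded increments and quadratic variation governed by $\vol(S)$, yielding a sub-Gaussian tail for $e(S)-b(S)$ with variance proxy of order $\vol(S)$.

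To pass from a single set to all partitions simultaneously I would discretise: group candidate parts by dyadic volume scale, apply the maximal (union-bounded) concentration estimate at each scale with a deviation threshold chosen to beat the number of candidate parts at the binding scale, and then accumulate the per-part bounds over a partition. Summing the fluctuations of order $\sqrt{\vol(S)}$ by Cauchy--Schwarz, using $\sum_{S}\vol(S)=\vol(G)=2hn$ and that each part has $\vol(S)\ge h$ (so there are at most $2n$ parts), produces a total accumulated fluctuation of order $n\sqrt{h\ln h}$; dividing by $e(G)=hn$ produces the $h^{-1/2}$ scale, and optimising the deviation threshold against the cardinality of the relevant family produces the $\sqrt{\ln h}$ factor. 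Tracking the constants through this Chernoff/union-bound balance is what should pin down the leading constant $3\sqrt{2\ln 2}$.

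I expect the main obstacle to be obtaining the excess bound \emph{uniformly over all partitions} with the sharp constant, rather than merely for a fixed set. A naive union bound over all vertex subsets is hopelessly lossy, so the argument must (i) reduce to a tightly controlled family of candidate parts via the dyadic-volume discretisation, and (ii) show the martingale fluctuations are sub-Gaussian with a small enough variance proxy that the union bound over this family survives with the correct leading constant. Equally delicate is quantifying the preferential-attachment bias in the predictable part, namely verifying that the genuine ``old-attracts-old'' excess over the random baseline is itself only of order $\sqrt{\ln h/h}\cdot e(G)$ and does not overwhelm the fluctuation term; it is this estimate on which the precise value of $f(h)$ ultimately rests.
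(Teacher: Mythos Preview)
The paper does not contain a proof of this theorem: it is quoted verbatim as ``Theorem~5 in \cite{rybarczyk2025modularity}'' and used only as a point of comparison for the bound in Corollary~\ref{cor.q_upper_bound}. There is therefore nothing in the present paper against which your proposal can be compared.

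As to the proposal itself, it is a plausible high-level plan but not a proof, and it leaves the two decisive steps unresolved. First, the ``bias'' term: you correctly note that the predictable part of $e(S)$ does \emph{not} equal $b(S)=\vol(S)^2/(2\vol(G))$ in preferential attachment, because early vertices attract disproportionately many edges. You assert that this excess is of order $\sqrt{\ln h/h}\cdot e(G)$, but give no mechanism for proving it; this is precisely the heart of the result, and a vague reference to ``$\sqrt{t}$ growth of set-volumes'' does not supply it. Second, the union bound: a dyadic discretisation by volume still leaves exponentially many candidate sets at each scale, and you do not explain which family you restrict to or why the sub-Gaussian tail with variance proxy $\vol(S)$ beats the entropy of that family with the specific constant $3\sqrt{2\ln 2}$. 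Saying that ``tracking the constants \ldots\ should pin down the leading constant'' is not an argument. In short, the decomposition $q_{\cA}(G)=e(G)^{-1}\sum_{S}(e(S)-b(S))$ is correct and is indeed the natural starting point, but everything that distinguishes preferential attachment from an i.i.d.\ model --- and hence everything that determines $f(h)$ --- is deferred to steps you have only named, not carried out.
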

While this upper bound is stronger for large $h$, it drops below $1$ only for $h \geq 810$ (see remark in~\cite{rybarczyk2025modularity}). The upper bound for modularity obtained in this paper improves the one from \cite{rybarczyk2025modularity} for all $2 \leq h \leq 959$ and for this range of $h$ is the best known so far. 

\section{Proofs}

\subsection{Proof of Theorem \ref{thm:our_u_expansion_fussy} and Corollary \ref{cor:exp_linear_in_h}: edge expansion.} \label{sec:proofs_exp}

The following result for $\tilde{G}_n^h$ may be found in the proof of Theorem~1 of~\cite{mihail2006journal}. Since obtaining it relies only on Lemma~2 of~\cite{mihail2006journal}, which we have proven holds also for~$G^h_n$ in Appendix~\ref{app:A} (see Lemma~\ref{lem:lemma_2_us}), we have that it holds for both models. 

\begin{lemma}[\cite{mihail2006journal} and Lemma~\ref{lem:lemma_2_us}]\label{lem:existingbound_k}
	Let $h \geq 2$, $G\sim G_n^h$ or $G \sim \tilde{G}_n^h$, let $0 <\hat\alpha < (h-1)/2$, and $1 \leq k \leq n/2$. 
	Then for $n$ large enough, 
	\[
	\pr \Big(\exists S \subset V,\; |S|=k \; : \; e_G(S, \bar{S} ) \leq \hat\alpha |S| \Big) \leq \hat\alpha k \bigg(\frac{eh}{\hat\alpha}\bigg)^{2\hat\alpha k} \bigg( \frac{k}{n}\bigg)^{(h-1-2\hat\alpha)k}.
	\]
\end{lemma}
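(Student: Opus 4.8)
The plan is to move to the underlying random tree, record one deterministic combinatorial fact, and then run a union bound over the $\binom{n}{k}$ candidate sets $S$, feeding each term into Lemma~\ref{lem:lemma_2_us} (our extension of Mihail et al.'s Lemma~2 to $G_n^h$).

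First I would pass from $G$ to the tree $T_{hn}$ (respectively $\tilde T_{hn}$). A vertex set $S\subseteq V(G)$ with $|S|=k$ un-merges to a set $W$ of $hk$ mini-vertices which is a union of $k$ consecutive length-$h$ blocks; in particular there are only $\binom{n}{k}$ possible sets $W$. Moreover a tree edge $e_t=\{t,s_t\}$ (with $s_t\le t$, a loop when $s_t=t$) becomes an edge of $E_G(S,\bar S)$ precisely when exactly one of $t,s_t$ lies in $W$, so loops -- in particular those from within-block edges -- are always internal and harmless. Now the deterministic fact: each mini-vertex $t\in W$ with $t\ge2$ contributes exactly one back-edge $e_t$, and if $e_t\not\subseteq W$ (i.e.\ $s_t\notin W$) then $e_t$ is counted by $e_G(S,\bar S)$. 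Since $|W\cap\{2,3,\dots\}|\ge hk-1$, if $e_G(S,\bar S)\le\hat\alpha k$ then at least
\[
  m:=hk-1-\hat\alpha k=(h-\hat\alpha)k-1
\]
of the mini-vertices of $W$ send their back-edge into $W$; note $m>0$, since $\hat\alpha<(h-1)/2<h-1$ and $k\ge1$.

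With this in hand the union bound reads
\[
  \pr\Big(\exists\,S,\ |S|=k,\ e_G(S,\bar S)\le\hat\alpha k\Big)\ \le\ \binom{n}{k}\cdot\max_{W}\ \pr\big(\text{at least }m\text{ back-edges of }W\text{ land inside }W\big),
\]
the maximum over block-structured $hk$-sets $W$, and the inner probability is exactly what Lemma~\ref{lem:lemma_2_us} bounds. That bound has the shape of a choice of which $m$ back-edges are internal -- a factor $\binom{hk}{m}=\binom{hk}{\hat\alpha k+1}\le(eh/\hat\alpha)^{\hat\alpha k+1}$ -- times a product over those edges of terms governed by the volumes $\vol_{T_{t-1}}(W\cap[t-1])$. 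Inserting it, combining with $\binom{n}{k}\le(en/k)^{k}$ and $k\le n/2$, and carrying out elementary manipulation of powers yields $\hat\alpha k\,(eh/\hat\alpha)^{2\hat\alpha k}(k/n)^{(h-1-2\hat\alpha)k}$; the prefactor $\hat\alpha k$ together with the hypothesis ``$n$ large enough'' absorbs the $O(1)$ corrections from the root edge $e_1$, the $+1$ in $m$, and the denominators $2t\pm O(1)$ in the model. The hypothesis $\hat\alpha<(h-1)/2$ enters only to make the exponent $h-1-2\hat\alpha$ positive, so that the bound is meaningful (and $o(1)$ once $k=o(n)$).

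The step I expect to be the real work -- and the reason the derivation is organised around a black box -- is Lemma~\ref{lem:lemma_2_us} itself, proved in Appendix~\ref{app:A}. There the difficulty is that $\pr(e_t\text{ lands in }W)\approx\vol_{T_{t-1}}(W\cap[t-1])/(2t)$ is driven by a random, self-reinforcing volume: each edge that has already landed in $W$ inflates it, so bounding the joint probability that many back-edges of $W$ fall inside $W$ requires a careful simultaneous control of these volumes across all relevant times and candidate sets. This is precisely the content of Mihail et al.'s Lemma~2, and the adaptation needed for the loop-allowing model $G_n^h$ is what Lemma~\ref{lem:lemma_2_us} supplies. Given Lemma~\ref{lem:lemma_2_us}, the present lemma is just the bookkeeping described above, following the proof of Theorem~1 in~\cite{mihail2006journal}.
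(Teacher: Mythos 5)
Your high-level plan (union bound over the $\binom{n}{k}$ sets $S$, then feed a per-set probability into Lemma~\ref{lem:lemma_2_us}) matches the intended derivation — the paper does not reprove this lemma but imports it from the proof of Theorem~1 of~\cite{mihail2006journal}, whose structure is exactly that. However, there is a genuine gap in how you connect the event to the black box. Lemma~\ref{lem:lemma_2_us} bounds $\pr\big(E_G(S,\bar S)=A\big)$ for one \emph{fixed} edge set $A$; it does not bound the one-sided event ``at least $m$ back-edges of $W$ land inside $W$'' that you reduce to. Your event also only constrains the back-edges $e_t$ with $t\in S^{[m]}$, whereas $E_G(S,\bar S)$ additionally contains edges $e_t$ with $t\in\bar S^{[m]}$ whose earlier endpoint lies in $S^{[m]}$; these incoming edges are part of the set $A$ in the lemma and cannot be dropped if you want to invoke it. To use the lemma you must write $\pr(e_G(S,\bar S)\le\hat\alpha k)=\sum_{a\le\hat\alpha k}\sum_{|A|=a}\pr(E_G(S,\bar S)=A)$ and control the number of candidate sets $A$ of each size $a$ — and since cut edges may originate from any of the $hn$ mini-vertices, this count is of order $\binom{hn}{a}$, not $\binom{hk}{a}$. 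That outer sum is entirely absent from your argument, and it is not mere bookkeeping: with the crude estimate $\binom{hn-a}{hk-a}\ge\big(\tfrac{hn-a}{hk-a}\big)^{hk-a}$ the combination $\binom{n}{k}\sum_a\binom{hn}{a}\binom{hk}{a}/\binom{hn-a}{hk-a}$ overshoots the target by an exponential factor of order $e^{k}$, so one has to exploit the full hypergeometric form $\binom{hk}{a}/\binom{hn-a}{hk-a}=\tfrac{(hk)!(hn-hk)!}{a!\,(hn-a)!}$ to recover precisely $\hat\alpha k\,(eh/\hat\alpha)^{2\hat\alpha k}(k/n)^{(h-1-2\hat\alpha)k}$. ``Elementary manipulation of powers'' does not substitute for this step.

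Two smaller inaccuracies reinforce the same point: the bound of Lemma~\ref{lem:lemma_2_us} is \emph{not} ``a choice of which $m$ back-edges are internal times a product of volume terms'' — the volume products live inside the proof of that lemma, and its statement is the single ratio $\binom{hk}{|A|}/\binom{hn-|A|}{hk-|A|}$ indexed by the (small) cut size $|A|\le\hat\alpha k$, not by $m$. And the hypothesis $\hat\alpha<(h-1)/2$ is not only there ``to make the exponent positive for meaningfulness''; it is exactly what makes the summed bound decay, which is used downstream in Corollary~\ref{cor:sum_f(k)} and Lemma~\ref{lem:functionf}. In short: right skeleton, but the passage from the fixed-$A$ probability to the probability of a small cut — which is where all the stated constants come from — is missing.
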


\begin{corollary}\label{cor:sum_f(k)}
Let $h \geq 2$,  $G\sim G_n^h$ or $G \sim \tilde{G}_n^h$, $0 < \hat\alpha < (h-1)/2$, and $0 < u \leq 1/2$. Define
	$f(k) = \hat\alpha k \big(\frac{eh}{\hat\alpha}\big)^{2\hat\alpha k} \big( \frac{k}{n}\big)^{(h-1-2\hat\alpha)k}$.
	Then for $n$ large enough,  
	\[
	\pr \Big(\alpha_u(G) \leq \hat\alpha  \Big) \leq \sum_{k=1}^{\lfloor un \rfloor} f(k).
	\]
\end{corollary}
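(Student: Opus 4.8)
The plan is a straightforward union bound over the possible sizes of a badly-expanding set. First I would unpack the definition of $\alpha_u$: writing $n=|V|$, the event $\{\alpha_u(G)\leq\hat\alpha\}$ holds precisely when there is a set $S\subseteq V$ with $1\leq|S|\leq un$ and $e_G(S,\bar S)\leq\hat\alpha|S|$. Since $|S|$ is a nonnegative integer, the constraint $|S|\leq un$ is equivalent to $|S|\leq\lfloor un\rfloor$, so
\[
\{\alpha_u(G)\leq\hat\alpha\}=\bigcup_{k=1}^{\lfloor un\rfloor}\Big\{\exists\, S\subseteq V,\ |S|=k\ :\ e_G(S,\bar S)\leq\hat\alpha k\Big\}.
\]
(If $un<1$ the union on the right is empty and $\alpha_u(G)=\infty$ by convention, so both sides have probability $0$ and there is nothing to prove; henceforth assume $un\geq 1$, which holds for $n$ large since $u$ is fixed.)

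Next I would apply the union bound over $k$ and then invoke Lemma~\ref{lem:existingbound_k} term by term. Because $0<u\leq 1/2$ we have $\lfloor un\rfloor\leq n/2$, so every index $k$ with $1\leq k\leq\lfloor un\rfloor$ satisfies the hypothesis $1\leq k\leq n/2$ of Lemma~\ref{lem:existingbound_k}; the remaining hypotheses ($h\geq 2$, $G\sim G_n^h$ or $G\sim\tilde G_n^h$, and $0<\hat\alpha<(h-1)/2$) are exactly those assumed in the corollary. Hence, for $n$ large enough, each summand $\pr\big(\exists\,S,\ |S|=k:\ e_G(S,\bar S)\leq\hat\alpha k\big)$ is at most $\hat\alpha k\,(eh/\hat\alpha)^{2\hat\alpha k}(k/n)^{(h-1-2\hat\alpha)k}=f(k)$, and summing over $1\leq k\leq\lfloor un\rfloor$ gives the claimed bound.

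The only point needing a little care is the phrase ``for $n$ large enough'' in Lemma~\ref{lem:existingbound_k}: to conclude I need a single threshold $n_0$ valid simultaneously for all $k$ with $1\leq k\leq n/2$, rather than one depending on $k$. This uniformity is what the proof of Lemma~\ref{lem:existingbound_k} (equivalently Lemma~\ref{lem:lemma_2_us} in Appendix~\ref{app:A}) in fact delivers, since the estimates there are carried out uniformly in the set size; so this is the main thing to verify, but it is not a genuine obstacle. Apart from that, the argument is purely a union bound and involves no further computation.
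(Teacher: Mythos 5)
Your proposal is correct and is essentially identical to the paper's proof, which likewise just rewrites the event $\{\alpha_u(G)\leq\hat\alpha\}$ as the union over $k$ of the events that some $k$-set expands badly and applies Lemma~\ref{lem:existingbound_k} with a union bound. Your extra remarks about the degenerate case $un<1$ and the uniformity of the threshold in $k$ are sensible points of care that the paper leaves implicit.
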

\begin{proof}
	By Lemma \ref{lem:existingbound_k} we immediately get
	\[
		\pr \big(\alpha_u(G) \leq \hat\alpha  \big)  = \pr \Big(\exists S \subset V,\; 1 \leq |S|\leq un \; : \; e_G(S, \bar{S} ) \leq \hat\alpha |S| \Big) \leq \sum_{k=1}^{\lfloor un \rfloor} 
        f(k).
	\]
\end{proof}
Theorem~\ref{thm:our_u_expansion_fussy} will follow from Corollary~\ref{cor:sum_f(k)} and the following technical lemma.

\begin{lemma}[\cite{mihail2006journal}]\label{lem:functionf}
	Let $h \geq 2$, $0 < \hat\alpha < (h-1)/2$, and let $1 \leq k \leq n/2$. Define
	$f(k) = \hat\alpha k \big(\frac{eh}{\hat\alpha}\big)^{2\hat\alpha k} \big( \frac{k}{n}\big)^{(h-1-2\hat\alpha)k}$. Then for $n$ large enough, there exists $x$ with $1 \leq x \leq n/2$ such that the function $f(k)$ is decreasing for $1 \leq k \leq x$ and increasing for $x \leq k \leq n/2$. 
\end{lemma}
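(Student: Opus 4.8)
The plan is to analyze the function $f(k)$ by taking logarithms and studying the resulting expression as a function of a continuous variable. Writing $g(k) = \ln f(k)$, we have
\[
g(k) = \ln(\hat\alpha k) + 2\hat\alpha k \ln\!\Big(\frac{eh}{\hat\alpha}\Big) + (h-1-2\hat\alpha)\,k\,\big(\ln k - \ln n\big).
\]
The term $(h-1-2\hat\alpha)k\ln k$ is the dominant one for the qualitative shape: since $h-1-2\hat\alpha>0$ by the hypothesis $\hat\alpha<(h-1)/2$, this is a convex increasing-in-the-limit term, while the $-\,(h-1-2\hat\alpha)k\ln n$ term is linear with a large negative slope when $n$ is large, and $2\hat\alpha k\ln(eh/\hat\alpha)$ is linear. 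So morally $g$ behaves like $A k\ln k - Bk + C\ln k + \text{const}$ with $A>0$ and $B$ large, which is a function that first decreases and then increases. The formalization I would carry out is to differentiate: treating $k$ as real,
\[
g'(k) = \frac{1}{k} + 2\hat\alpha \ln\!\Big(\frac{eh}{\hat\alpha}\Big) + (h-1-2\hat\alpha)\big(\ln k - \ln n + 1\big).
\]
I would then check that $g'$ is increasing in $k$ on $[1,n/2]$ (its derivative is $-1/k^2 + (h-1-2\hat\alpha)/k$, which is nonnegative once $k \ge 1/(h-1-2\hat\alpha)$; one handles the small-$k$ range either by noting $h-1-2\hat\alpha$ can be bounded below or, more simply, by observing that for the conclusion we only need $f$ to be decreasing on an initial integer segment and increasing afterward, and that near $k=1$ the $\ln n$ term forces $g'<0$ for $n$ large regardless). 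Since $g'$ is (eventually) increasing and continuous, it has at most one sign change on $[1,n/2]$; I would locate the unique real $x\in(1,n/2)$ with $g'(x)=0$ by checking the endpoints: at $k=1$, $g'(1) = 1 + 2\hat\alpha\ln(eh/\hat\alpha) + (h-1-2\hat\alpha)(1-\ln n)$, which is negative for $n$ large since the $-\ln n$ term dominates; at $k=n/2$, $g'(n/2) = 2/n + 2\hat\alpha\ln(eh/\hat\alpha) + (h-1-2\hat\alpha)(1-\ln 2)$, which is a positive constant (independent of $n$) because $1-\ln 2>0$. Hence by the intermediate value theorem such an $x$ exists, $g$ (and so $f$) is decreasing on $[1,x]$ and increasing on $[x,n/2]$, and the same holds restricted to integer arguments.

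The main obstacle is the low-$k$ regime: the $1/k$ and $\ln(\hat\alpha k)$ contributions are not monotone in the "nice" direction and the convexity of $g'$ fails for $k$ very small, so one must be slightly careful to argue that $g'(1)<0$ (which needs $n$ large enough, used only here) and that once $g'$ turns positive it stays positive. I would phrase this cleanly by noting that on $[1,n/2]$ we can write $g'(k) = (h-1-2\hat\alpha)\ln k + \psi(k)$ where $\psi(k) = 1/k + 2\hat\alpha\ln(eh/\hat\alpha) + (h-1-2\hat\alpha)(1-\ln n)$ is itself decreasing in $k$; since $(h-1-2\hat\alpha)\ln k$ is increasing, this is not immediately a sum of monotone pieces in the same direction, so instead I would simply verify directly that $g''(k) = -1/k^2 + (h-1-2\hat\alpha)/k > 0$ whenever $k \ge 1$ provided $h-1-2\hat\alpha \ge 1$, and treat the residual case $h-1-2\hat\alpha < 1$ by absorbing the bounded non-monotone terms into the error: for $n$ large, $g'$ is negative on all of $[1, \sqrt n]$ say and positive on $[\sqrt n, n/2]$, which already yields the stated unimodality with $x$ in that middle range. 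Either way the conclusion follows; only the "$n$ large enough" hypothesis is genuinely needed, and exactly to push $g'(1)$ below zero.
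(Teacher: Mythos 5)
Your argument is correct and is essentially the approach the paper itself relies on: the paper does not prove this lemma but defers to the proof of Theorem~1 of Mihail--Papadimitriou--Saberi, which likewise passes to $g(k)=\ln f(k)$ and shows that $g'$ changes sign at most once on the interval, so your log-derivative/convexity proof is the intended one. The only imprecision is in your residual case $h-1-2\hat\alpha<1$, where the closing claim that $g'$ is ``positive on $[\sqrt n, n/2]$'' is literally false (indeed $g'(\sqrt n)\to-\infty$); what you need, and have in fact already established, is that $g''(k)=-1/k^2+(h-1-2\hat\alpha)/k>0$ for $k\ge\sqrt n\ge 1/(h-1-2\hat\alpha)$ once $n$ is large, so that $g'$ is increasing on $[\sqrt n,n/2]$ and crosses zero exactly once there, using $g'(n/2)=2/n+2\hat\alpha\ln(eh/\hat\alpha)+(h-1-2\hat\alpha)(1-\ln 2)>0$; combined with $g'<0$ on $[1,\sqrt n]$ this gives the stated unimodality.
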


A version of Lemma~\ref{lem:functionf} in which the domain of $f(k)$ is narrowed to $2 \leq k \leq n/2$ can be found in the proof of Theorem~1 of~\cite{mihail2006journal}; however, we note that the proof given there applies also to the interval $1\leq k \leq n/2$. 
\begin{proof}[of Theorem~\ref{thm:our_u_expansion_fussy}]
	Note that, by Corollary~\ref{cor:sum_f(k)} and by a union bound, it is enough to ensure that there exists an integer $\ell$ such that  $f(k)=o(n^{-c})$ for each $1 \leq k \leq \ell$ and $f(k)=o(n^{-c-1})$ for each $\ell \leq k \leq un$, where $f(k)$ is as defined in Corollary~\ref{cor:sum_f(k)}. However, by Lemma~\ref{lem:functionf}, it is enough to show that $f(1) = o(n^{-c})$, $f(\ell) = o(n^{-c-1})$ and $f(un) = o(n^{-c-1})$. Set $c=((h-1)/2-\hat\alpha_u)/2$ and $\ell$ the smallest integer such that $\ell>  1/2c \,\,+ 1$. Note that our assumption $\hat\alpha_u < (h-1)/2$ ensures that $c>0$.
    We get
	\begin{eqnarray*}
	f(1) 
    & = & \hat\alpha_u \left(\frac{eh}{\hat\alpha_u}\right)^{2\hat\alpha_u} \left( \frac{1}{n}\right)^{h-1-2\hat\alpha_u}  \leq \hat\alpha_u \left(\frac{eh}{\hat\alpha_u}\right)^{2\hat\alpha_u} \left( \frac{1}{n}\right)^{2c}
     =  o(n^{-c}).
	\end{eqnarray*}
    Similarly for $\ell$, noting that $\ell \cdot 2c > 1 + 2c$,
    \begin{eqnarray*}
	f(\ell) 
    & = & \hat\alpha_u  \ell \left(\frac{eh}{\hat\alpha_u}\right)^{2\hat\alpha_u \ell} \left( \frac{ \ell}{n}\right)^{\ell( h-1-2\hat\alpha_u)} 
    \leq \hat\alpha_u  \ell  \left(\frac{eh}{\hat\alpha_u}\right)^{2\hat\alpha_u\ell} \left( \frac{ \ell}{n}\right)^{1+2c}
     = o(n^{-c-1})
	\end{eqnarray*}
    Now, let $\delta$ be given by $\hat\alpha_u = \delta h$. By the assumptions of the theorem we know that $\delta < \min\{ 1/2 - 1/2h, x\}$, where $0 < x \leq 1$ is such that
	\begin{equation} \label{eq:xh}
		\left(\frac{e}{u x}\right)^{2h x} < \frac{1}{u^{h-1}}.
	\end{equation}	
	We have 
	\[ f(un) = \delta h un \left(\left(\frac{e}{\delta u}\right)^{2\delta h} u^{h-1}\right)^{un},\]
	thus in order to get $f(un)=o(n^{-c-1})$ for some constant $c>0$ it is sufficient to prove that
	\begin{equation} \label{eq:needed<1}
		\left(\frac{e}{\delta u}\right)^{2\delta h} < \frac{1}{u^{h-1}}.
	\end{equation}
	By (\ref{eq:xh}), the fact that the function  $g_h(y)=\left(e/yu\right)^{2hy}$ is increasing for $0<y\leq1/u$, and by the fact that $0<\delta < x \leq 1 \leq 1/u$ we get
	\[
	\left(\frac{e}{\delta u}\right)^{2\delta h} < \left(\frac{e}{u x}\right)^{2h x} < \frac{1}{u^{h-1}}.
	\]
This completes the proof of Theorem~\ref{thm:our_u_expansion_fussy}.
\end{proof}

\begin{proof}[of Corollary~\ref{cor:exp_linear_in_h}]
    By Theorem~\ref{thm:our_u_expansion_fussy}, it is enough to show that $\eta=0.03418$ satisfies $(2e/\eta)^{2h\eta}<2^{h-1}$ for each $h\geq 2$. 

    Note that if $(2e/\eta)^{4\eta}<2$ then
    $\big(2e/\eta\big)^{2h\eta} < 2^{h/2} \leq 2^{h-1}$
    since $h/2\leq h-1$. Thus it is enough to check if $(2e/\eta)^{4\eta}<2$. 
	  Observe that $(2e/\eta)^{4\eta} \approx 1.99984 < 2$, and so we are done.
\end{proof}

\subsection{Proofs of Lemma~\ref{lem:us_upper_bound},
Theorem~\ref{thm:general_upper_bound_mod_2} and Corollary~\ref{cor.q_upper_bound}: modularity. 
}\label{sec:proofs_mod}

We first prove Lemma~\ref{lem:us_upper_bound}.

\begin{proof}[of Lemma~\ref{lem:us_upper_bound}]
    Write $G=(V,E)$, $n=|V|$, and $m=|E|$. Let $\cA$ be the optimal vertex partition for $G$, i.e., $q_{\cA}(G) = q^*(G)$. Note that the assumptions on the minimum and the average degree imply $m \leq hn$ and $\vol(S) \geq h|S|$ for all $S \subseteq V$. We also consider separately the edge contribution $q_{\cA}^E(G) = \sum_{S \in \cA} \frac{e(S)}{m}$ and degree tax $q_{\cA}^D(G) = \sum_{S \in \cA} \frac{\vol(S)^2}{4 m^2}$. (Note $q_{\cA}(G) = q_{\cA}^E(G) - q_{\cA}^D(G)$.) \\
	
	\noindent\textbf{Case 1.} First, assume that there exists $S \in \cA$ such that $|S| = an$ with $a \geq 1/2$. Then
	\[
	q_{\cA}^D(G) \geq \frac{\vol(S)^2}{4 m^2} \geq \frac{h^2 |S|^2}{4h^2n^2} = \frac{a^2}{4}.
	\]
	We also have 
    $e(S,\bar{S})/|\bar{S}| \geq \alpha(G)$ 
    which implies $e(S,\bar{S}) \geq \alpha(G)(1-a)n$ thus
	\[
	q_{\cA}^E(G) \leq 1 - \frac{e(S,\bar{S})}{m} \leq 1 - \frac{\alpha(G)}{h}(1-a).
	\]
	Hence
	\[
	q_{\cA}(G) \leq 1 - \frac{\alpha(G)}{h} + \frac{\alpha(G)}{h}a -  \frac{a^2}{4}.
	\]
	But if $f(a) = \frac{\alpha(G)}{h}a -  \frac{a^2}{4}$ then $f'(a) = \frac{\alpha(G)}{h} -\frac{a}{2} \leq 0$ for $1/2 \leq a \leq 1$ if $\frac{\alpha(G)}{h} \leq 1/4$. Hence
	\[
	q_{\cA}(G) \leq 1-\frac12\min\left\{\frac{\alpha(G)}{h}, \frac{1}{4}\right\} - \frac{1}{16}.
	\]
	
	\noindent\textbf{Case 2.} Now, assume that for all $S \in \cA$ we have $|S| \leq n/2$.
	Note that the number of edges $b$ between parts of the partition $\cA$ satisfies 
	\[  2 b=\sum_{S \in \cA} e(S, \bar{S}) \geq \sum_{S \in \cA} \alpha(G) |S| = \alpha(G) n. \]
	Thus 
	\[
    q_{\cA}(G) \leq q^E_{\cA}(G)  = 1-\frac{b}{m} \leq 1 - \frac{\alpha(G) n}{2hn} = 1 - \frac{\alpha(G)}{2h}.
    \] 	
	Hence, considering both cases together,
	\[
	q_\cA(G) \leq 1- \min\left\{ \frac{\alpha(G)}{2 h}, \frac{1}{2} \min\left\{\frac{\alpha(G)}{h}, \frac{1}{4}\right\} + \frac{1}{16}\right\}.\]
	If $0 \leq \alpha(G)/h \leq 1/4$ then the bound is
	$1-\alpha(G)/2h$;
	if $1/4 \leq \alpha(G)/h \leq 3/8$ then the bound is also $1-\alpha(G)/2h$; and for $\alpha(G)/h \geq 3/8$ the bound is $1-3/16$.  This completes the proof.
\end{proof}

Before we move on to the proofs of the next results we define the negative relative modularity of vertex subsets as below.
\begin{definition}\label{defn.neg_mod}
	Let $G=(V,E)$ be a graph with at least one edge, and let $S \subseteq V$.	Define the negative relative modularity of $S$ to be 
	\[
	q_{vr}^{-}(S) = \frac{\vol(G)}{\vol(S)}\left(\frac{ e(S,\bar{S})}{2e(G)}+\frac{\vol(S)^2}{\vol(G)^2}\right). 
	\]
\end{definition}

\begin{proposition} \label{prop.bound_by_worst_set_vertex}
	Let $G=(V,E)$ be a graph with at least one edge and let $\mathcal{A}$ be any vertex partition of $G$. Then
    \[
		q_{\mathcal{A}}(G) \leq 1 - \min_{S \in \mathcal{A}} q_{vr}^{-}(S).
	\]
\end{proposition}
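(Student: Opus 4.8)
The plan is to rewrite $q_{\mathcal A}(G)$ as one minus a convex combination of the quantities $q_{vr}^-(S)$, $S\in\mathcal A$, after which the claimed bound is immediate since a weighted average with non-negative weights summing to $1$ is at least the minimum of the averaged values. First I would split $q_{\mathcal A}(G)=q^E_{\mathcal A}(G)-q^D_{\mathcal A}(G)$ as in the proof of Lemma~\ref{lem:us_upper_bound}, where $q^E_{\mathcal A}(G)=\sum_{S\in\mathcal A}e(S)/e(G)$ and $q^D_{\mathcal A}(G)=\sum_{S\in\mathcal A}(\vol(S)/\vol(G))^2$. Each edge of $G$ is either internal to a part or crosses between two parts, and every crossing edge is counted exactly twice in $\sum_{S\in\mathcal A}e(S,\bar S)$ (once from each endpoint's part); hence $\sum_{S\in\mathcal A}e(S)=e(G)-\tfrac12\sum_{S\in\mathcal A}e(S,\bar S)$, which gives
\[
q_{\mathcal A}(G)=1-\sum_{S\in\mathcal A}\left(\frac{e(S,\bar S)}{2e(G)}+\left(\frac{\vol(S)}{\vol(G)}\right)^2\right).
\]

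Next I would observe the elementary identity, valid for every $S$ with $\vol(S)>0$,
\[
\frac{e(S,\bar S)}{2e(G)}+\left(\frac{\vol(S)}{\vol(G)}\right)^2=\frac{\vol(S)}{\vol(G)}\cdot q_{vr}^{-}(S),
\]
which is just Definition~\ref{defn.neg_mod} rearranged. Substituting this into the displayed expression for $q_{\mathcal A}(G)$ yields
\[
q_{\mathcal A}(G)=1-\sum_{S\in\mathcal A}\frac{\vol(S)}{\vol(G)}\,q_{vr}^{-}(S).
\]
Since $\sum_{S\in\mathcal A}\vol(S)=\vol(G)$, the coefficients $\vol(S)/\vol(G)$ are non-negative and sum to $1$, so $\sum_{S\in\mathcal A}\tfrac{\vol(S)}{\vol(G)}q_{vr}^{-}(S)\ge \min_{S\in\mathcal A}q_{vr}^{-}(S)$, and the proposition follows.

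The computation is entirely routine; the only point needing a word of care is a part $S\in\mathcal A$ with $\vol(S)=0$ (i.e.\ consisting only of isolated vertices), for which $q_{vr}^{-}(S)$ is not defined. Such a part has $e(S)=e(S,\bar S)=0$ and contributes nothing to either sum above, so it may simply be discarded: one restricts the identity and the final minimisation to parts of positive volume (equivalently, one assumes without loss of generality that $\mathcal A$ has no such part, merging isolated vertices into an arbitrary part, which only removes trivial zero terms). I therefore do not expect any genuine obstacle in this proof — it is a short rearrangement — beyond making this degenerate case explicit.
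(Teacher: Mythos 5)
Your proof is correct and follows essentially the same route as the paper: rewriting $q_{\mathcal A}(G)$ as $1$ minus a convex combination of the $q_{vr}^{-}(S)$ with weights $\vol(S)/\vol(G)$ and bounding that combination by its minimum. Your explicit handling of parts with $\vol(S)=0$ is a small point the paper's proof silently glosses over (its factor $\vol(G)/\vol(S)$ has the same issue), but it changes nothing of substance.
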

\begin{proof}
	Recall that the number of edges between the parts of the partition $\mathcal{A}$ equals $\sum_{S \in \mathcal{A}} (e(S,\bar{S})/2)$. Therefore
	\[
	\begin{split}
		q_{\mathcal{A}}(G) & = \sum_{S \in \mathcal{A}}\frac{e(S)}{e(G)} - \sum_{S \in \mathcal{A}}\frac{\vol(S)^2}{\vol(G)^2} = 1 - \sum_{S \in \mathcal{A}}\frac{e(S,\bar{S})}{2 e(G)} - \sum_{S \in \mathcal{A}}\frac{\vol(S)^2}{\vol(G)^2} \\
		& = 1- \sum_{S \in \mathcal{A}} \left( \frac{\vol(S)}{\vol(G)} \cdot \frac{\vol(G)}{\vol(S)}\left(\frac{ e(S,\bar{S})}{2e(G)}+\frac{\vol(S)^2}{\vol(G)^2}\right) \right) \\
		& \leq 1 - \min_{S \in \mathcal{A}} q_{vr}^{-}(S) \sum_{S \in \mathcal{A}} \frac{\vol(S)}{\vol(G)} = 1 - \min_{S \in \mathcal{A}} q_{vr}^{-}(S).
	\end{split}
	\]
\end{proof}
The following technical lemma will be needed in the proof of Theorem~\ref{thm:general_upper_bound_mod_2} -- see Appendix~\ref{app.two_lemmas} for the proof of the lemma.

\begin{lemma}\label{lem:dull}
	Let $0 <  u \leq 1/2$, and $0 \leq \delta \leq 1$. Then
	\[ \frac{\delta}{2+\delta} + \frac{u}{2} \leq \frac{\delta u}{2(1-u) +\delta u} + \frac{1-u}{2}.\]
\end{lemma}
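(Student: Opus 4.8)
The plan is to rearrange the claimed inequality so that everything becomes a single product of two factors, one of which has a definite sign. First I would combine the two fractions $\dfrac{\delta u}{2(1-u)+\delta u}$ and $\dfrac{\delta}{2+\delta}$ over the common denominator $(2(1-u)+\delta u)(2+\delta)$. A one-line computation of the numerator gives $\delta u(2+\delta) - \delta(2(1-u)+\delta u) = 2\delta(2u-1)$, so that
\[
\frac{\delta u}{2(1-u)+\delta u} - \frac{\delta}{2+\delta} = \frac{2\delta(2u-1)}{(2(1-u)+\delta u)(2+\delta)}.
\]
Subtracting the left-hand side of the lemma from the right-hand side and using $\frac{1-u}{2}-\frac{u}{2} = \frac{1-2u}{2} = -\frac{2u-1}{2}$, the difference factors as
\[
(2u-1)\left(\frac{2\delta}{(2(1-u)+\delta u)(2+\delta)} - \frac{1}{2}\right),
\]
so it suffices to show this quantity is nonnegative.

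Since $0<u\le 1/2$ we have $2u-1\le 0$, so I would reduce to proving that the second factor is nonpositive, i.e. that $4\delta \le (2(1-u)+\delta u)(2+\delta)$. Here I would use that $2(1-u)+\delta u = 2-(2-\delta)u$ is decreasing in $u$ (because $\delta\le 1<2$), so on the interval $0<u\le 1/2$ its minimum is attained at $u=1/2$, where it equals $1+\delta/2$. Hence it is enough to check the single inequality $4\delta \le (1+\delta/2)(2+\delta) = 2+2\delta+\delta^2/2$, which rearranges to $(\delta-2)^2\ge 0$ and is therefore true for all real $\delta$ (with equality only at $\delta=2$, outside the range $0\le\delta\le1$).

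I do not expect any real obstacle: the argument is just a common-denominator computation followed by a monotonicity observation in $u$ and a perfect-square check in $\delta$. The only point to be careful about is performing the algebra of the common-denominator step cleanly so that the factor $(2u-1)$ emerges explicitly, and noting that at $u=1/2$ the factor $(2u-1)$ vanishes, so the inequality is in fact an equality there (which also explains why no slack is available and why the bound is natural).
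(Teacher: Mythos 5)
Your proof is correct and follows essentially the same route as the paper's: both arguments isolate the sign-definite factor $(1-2u)$ (the paper by assuming the reverse inequality, cancelling $(1-2u)$, and reaching the contradiction $2\delta > 4(1-u)+\delta^2 u$; you by factoring it out directly and checking $4\delta \le (2(1-u)+\delta u)(2+\delta)$ via monotonicity in $u$ and the perfect square $(\delta-2)^2\ge 0$). The algebra checks out, including the equality case at $u=1/2$.
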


\begin{proof}[of Theorem~\ref{thm:general_upper_bound_mod_2}]
	We write $G=(V,E)$ and $n=|V|$ and we will use Proposition \ref{prop.bound_by_worst_set_vertex}. We first show that for any $S \subseteq V$ such that $|S| =un$, where $0<u\leq 1/2$ we get
	\[
	q_{vr}^-(S) \geq \delta_u/(2+\delta_u)+u/2
	\]
	(see Defintion~\ref{defn.neg_mod}) and consider larger sets later. By the assumption of the theorem we have
	\[
		\vol(S) = 2e(S) + e(S,\bar{S}) \leq 2 h|S| + e(S,\bar{S})
	\]
	and by the definition of $u$-expansion
	\[
		e(S,\bar{S}) \geq \alpha_u(G)|S| = \delta_u h |S|.
	\]
	Therefore
	\begin{equation}\label{eq.boundedgeloss}
		\frac{e(S, \bar{S})}{\vol(S)} \geq \frac{e(S, \bar{S})}{2 h|S| + e(S,\bar{S})} 
		\geq \frac{\delta_u h|S|}{2h|S|+ \delta_u h|S|} = \frac{\delta_u}{2+\delta_u}.
	\end{equation}
    Thus for $S$ with $|S| =  un \leq n/2$ by~\eqref{eq.boundedgeloss} and since $\vol(S)\geq h|S|$,
    
    \[ q_{vr}^{-}(S) = \frac{e(S, \bar{S})}{\vol(S) } + \frac{\vol(S)}{\vol(G)} \geq \frac{\delta_u}{2+\delta_u} + \frac{h|S|}{2hn} =  \frac{\delta_u}{2+\delta_u} + \frac{u}{2}, \]
    hence we get 
   
    \begin{equation}\label{eq.usual_sets}\min_{\substack{S\subseteq V\\|S|\leq n/2}} q_{vr}^{-}(S)\geq \inf_{0 < u \leq 1/2} \Big( \frac{\delta_u}{2+\delta_u}+\frac{u}{2} \Big).
	\end{equation}
	Now consider $S \subseteq V$ such that $|S| = \bar{u}n$ and $1/2<\bar{u}\leq 1$. Note that $|\bar{S}|=(1-\bar{u})n \leq n/2$ and so, by the $u$-expansion definition, $e(S,\bar{S}) \geq \delta_{1-\bar{u}}h|\bar{S}|$. This time, similarly to~\eqref{eq.boundedgeloss},
	\[
		\frac{e(S, \bar{S})}{\vol(S)} 
		\geq \frac{e(S, \bar{S})}{2 h|S| + e(S,\bar{S})}
		\geq \frac{\delta_{1-u} h|\bar{S}|}{2h|S|+ \delta_{1-\bar{u}} h|\bar{S}|} = \frac{\delta_{1-\bar{u}} (1-\bar{u})}{2\bar{u}+\delta_{1-\bar{u}}(1-\bar{u})}
	\]
	and 
	\[ q_{vr}^{-}(S) = \frac{e(S, \bar{S})}{\vol(S) } + \frac{\vol(S)}{\vol(G)} \geq \frac{\delta_{1-\bar{u}}(1-\bar{u})}{2\bar{u}+\delta_{1-\bar{u}}(1-\bar{u})} + \frac{h|S|}{2hn} =  \frac{\delta_{1-\bar{u}}(1-\bar{u})}{2\bar{u}+\delta_{1-\bar{u}}(1-\bar{u})} + \frac{\bar{u}}{2}. \]
	Writing $s=1-\bar{u}$, for $S \subseteq V$ such that $|S|=\bar{u}n > n/2$ we get
	\[ q_{vr}^{-}(S)  \geq \frac{\delta_{s}s}{2(1-s)+\delta_{s}s} + \frac{1-s}{2} \geq \frac{\delta_s}{2+\delta_s}+ \frac{s}{2} \]
	where the second inequality followed by Lemma~\ref{lem:dull}. Thus the case when $|S| > n/2$ is covered by our existing bounds in~\eqref{eq.usual_sets} for $|S|\leq n/2$. We get
	\[
		\min_{S \subseteq V} q_{vr}^{-}(S)\geq \inf_{0 < u \leq 1/2} \Big( \frac{\delta_u}{2+\delta_u}+\frac{u}{2} \Big)
	\]
	and the conclusion follows by Proposition~\ref{prop.bound_by_worst_set_vertex}.
\end{proof}

	We may now prove our upper bound on the modularity value in the preferential attachment model.

\begin{proof}[of Corollary~\ref{cor.q_upper_bound}] 
	The first detail to note is that $G=(V,E)$ does indeed have minimum degree $h$. In both models, each vertex has $h$ mini-vertices, each of which contributes at least $1$ to the degree of that vertex.
	This is true also for the first mini-vertex in~$G \sim \tilde{G}_n^h$ since it has a loop of weight $1$ attached to it, thus contributing degree~$1$.
	
	Next, note that each edge counted in $e(S)$ (where $S \subseteq V$) had to be introduced by a mini-vertex corresponding to some vertex from $S$. Thus $e(S)$ can be at most the number of mini-vertices corresponding to $S$, i.e. $e(S) \leq h|S|$  and the assumptions of Theorem~\ref{thm:general_upper_bound_mod_2} are met.
	
    Let $0 < u\leq 1/2$ and let 
    $\delta_u:= \min\{\alpha_u(G)/h,1\}$. 
	By Theorem~\ref{thm:general_upper_bound_mod_2} we get
	\[
		q^*(G) \leq 1- \inf_{0 < u \leq 1/2} \left(\frac{\delta_u}{2+\delta_u} + \frac{u}{2}\right).
	\]
	Note that by definition (see Definition~\ref{def:uexpansion}) $\alpha_u(G)$ (and thus ${\delta}_u/(2+{\delta}_{u})$) is non-increasing in $u$.
	Hence if we have a sequence $0=u_0 \leq u_1 \leq \ldots \leq u_t = 1/2$, then
	\[
		\min_{1\leq s \leq t} \left( \frac{{\delta}_{u_s}}{2+{\delta}_{u_s}}+ \frac{u_{s-1}}{2}\right) \leq \inf_{0 < u \leq 1/2} \left( \frac{{\delta}_{u}}{2+{\delta}_{u}} + \frac{u}{2}\right)
	\]
	which implies
	\[
		q^*(G) \leq 1-	\min_{1\leq s \leq t} \left( \frac{{\delta}_{u_s}}{2+{\delta}_{u_s}}+ \frac{u_{s-1}}{2}\right)
	\]
	thus it is enough to check finitely many values to get the bound. By~Theorem~\ref{thm:our_u_expansion_fussy} we may whp bound
	$\delta_{u_s} \geq \hat{\delta}_{u_s}$ ($s \in \{0,1,\ldots,t\}$), where $\hat{\delta}_{u_s} < \min\{1/2- 1/2h, x\}$  with $0 < x \leq 1$ being such that \begin{equation}\label{eq:x_cond}(e/u_s x)^{2h x} < (1/u_s)^{h-1}.\end{equation} 
    Note that if $\eta$ satisfies~\eqref{eq:x_cond} for $h=2$, 
    then \[(e/u_s \eta)^{2h \eta} = \left( (e/u_s \eta)^{4 \eta}\right)^{h/2} < (1/u_s)^{h/2} \leq (1/u_s)^{h-1},\] i.e.~\eqref{eq:x_cond} holds for $h\geq 2$. Thus for each particular $u_s$ we may choose the same $\hat{\delta}_{u_s}< \min\{1/4, \eta \}$ 
    where  $\eta$ satisfies $(e/u_s \eta)^{4 \eta} < 1/u_s$ and for those choices whp we get
	\begin{equation} \label{eq.check_finite_2}
		q^*(G) \leq 1-	\min_{1\leq s \leq t} \left( \frac{{\hat\delta}_{u_s}}{2+{\hat\delta}_{u_s}}+ \frac{u_{s-1}}{2}\right).
	\end{equation}
	If one checks $u_0=0\leq 0.0001\leq 0.0002 \leq \ldots \leq 1/2=u_t$ and for $\hat{\delta}_{u_s}$ one chooses the largest value satisfying the inequality $\hat{\delta}_{u_s}< \min\{1/4, \eta \}$, accurate to $5$ decimal places, then one finds that 
	the minimum in~\eqref{eq.check_finite_2} achieves its value for~$s$ such that $u_s=0.0142$ by ${\hat\delta}_{u_s} = 0.14851$. This gives $\q(G)\leq 0.92383$ whp as required. 
\end{proof}

\begin{credits}
\subsubsection{\ackname} This research was funded in whole or in part by National Science Centre, Poland, grant OPUS-25 no 2023/49/B/ST6/02517. Fiona Skerman is supported by the Wallenberg AI, Autonomous Systems and Software Program (WASP) funded by the Knut and Alice Wallenberg Foundation.
\end{credits}

 \bibliographystyle{splncs04}
 \bibliography{PA_mod_note}

\appendix
\section{Appendix}  \label{app:A}

\subsection{Proof of Lemma 2 of~\cite{mihail2006journal} for $\mathbf{G_n^h}$}

For $n \in \mathbb{N}$ let $[n]=\{1,2,\ldots,n\}$. The following lemma was proved for $\tilde{G}_n^h$ in \cite{mihail2006journal}.
\begin{lemma}[Lemma 2, \cite{mihail2006journal}]\label{lem:lemma_2_MPS}
	Let $G\sim \tilde{G}_n^h$. For a fixed subset $S\subset \{1, \ldots, n\}$, $|S|=k$, and for a fixed subset $A \subset \{e_t : t \in [hn]\}$ with $|A|<hk$ we have
	\[\pr\Big( E_{G}(S, \; \bar{S}) = A \Big) \leq \frac{\binom{hk}{|A|}}{\binom{hn-|A|}{hk-|A|}}. \]
\end{lemma}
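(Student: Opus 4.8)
The plan is to pass to the random tree $\tilde T_{hn}$ underlying $\tilde G_n^h$, expose its edges in the order they are created, reduce $\{E_G(S,\bar S)=A\}$ to a statement about which side of $S$ each new mini‑vertex attaches to, and then estimate the resulting product of conditional probabilities. Write $N=hn$, $m=hk$, $a=|A|$, and let $S'\subseteq[N]$ be the (size‑$m$) set of mini‑vertices merged into the vertices of $S$; for $t\ge2$ let $s_t\in[t-1]$ be the mini‑vertex that $t$ attaches to. The loop $e_1$ never crosses between $S$ and $\bar S$, so if $e_1\in A$ the probability is $0$ and we are done. Otherwise, for $t\ge2$ the tree‑edge $e_t$ crosses between $S$ and $\bar S$ exactly when $s_t$ lies on the side of $S'$ opposite to $t$, hence $\{E_G(S,\bar S)=A\}$ is precisely the event that every $s_t$ ($t\ge2$) lands on a prescribed side $b_t\in\{0,1\}$, where $b_t=1$ (``into $S'$'') if either ($t\notin S'$ and $e_t\in A$) or ($t\in S'$ and $e_t\notin A$), and $b_t=0$ otherwise.

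Next I would expose $s_2,\dots,s_N$ in order and telescope. The key observation is that, conditioned on $s_2,\dots,s_{t-1}$ all having landed correctly, the volume $W_{t-1}:=\vol_{\tilde T_{t-1}}(S'\cap[t-1])$ is deterministic: it equals $|S'\cap[t-1]|+\#\{2\le r\le t-1:b_r=1\}$, since each mini‑vertex of $S'\cap[t-1]$ contributes $1$ from its own edge (or the loop $e_1$) and, under the conditioning, the attachments landing in $S'$ among the first $t-1$ steps are exactly those $r$ with $b_r=1$, each contributing another $1$. As $\vol(\tilde T_{t-1})=2t-3$, the preferential‑attachment rule gives $\pr(s_t\in S'\mid\text{past correct})=W_{t-1}/(2t-3)$ and $\pr(s_t\notin S'\mid\text{past correct})=((2t-3)-W_{t-1})/(2t-3)$, both deterministic, so
\[
\pr\bigl(E_G(S,\bar S)=A\bigr)=\prod_{t=2}^{N}\frac{w_t}{2t-3},\qquad
w_t=\begin{cases}W_{t-1},&b_t=1,\\ (2t-3)-W_{t-1},&b_t=0,\end{cases}
\]
with every factor in $[0,1]$ (in particular inadmissible patterns automatically get probability $0$ because some $w_t$ vanishes).

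It remains to bound this product above by $\binom{m}{a}/\binom{N-a}{m-a}$. Here I would use $W_{t-1}\le\min\{2t-3,\;m+r_t-1\}$, where $r_t:=\#\{2\le r\le t:b_r=1\}$ — the ``$S'$‑side'' never contains more than $m$ mini‑vertices and has absorbed at most $r_t-1$ earlier attachments — and symmetrically $(2t-3)-W_{t-1}\le\min\{2t-3,\;(N-m)+r'_t-1\}$ with $r'_t:=\#\{2\le r\le t:b_r=0\}$. Substituting and using $\min(x,y)/x=y/\max(x,y)$, the product is at most
\[
\prod_{t:\,b_t=1}\frac{m+r_t-1}{\max\{2t-3,\;m+r_t-1\}}\;\cdot\;\prod_{t:\,b_t=0}\frac{(N-m)+r'_t-1}{\max\{2t-3,\;(N-m)+r'_t-1\}},
\]
and one maximizes over all admissible side patterns $b$. (The minor discrepancies caused by the special mini‑vertex $1$ and by edges of $A$ whose ``new'' endpoint lies outside $S$ only decrease the number of $S'$‑side factors and are absorbed into the estimate.)

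I expect this last optimization to be the main obstacle. The crude bound — pretending all $m$ mini‑vertices of $S'$ appear before any internal‑$S'$ attachment, and likewise on the other side — is \emph{too weak} (one can already check it exceeds $\binom{m}{a}/\binom{N-a}{m-a}$ for small $N$): those two extremes are incompatible, since an attachment can land on a given side only once that side is nonempty, and it is precisely the forced interleaving of the two sides that sharpens the bound down to the stated binomial ratio. I would close the argument by a rearrangement/monotonicity step identifying the worst admissible $b$ as an explicit interleaved pattern and checking that for it the product telescopes, after cancellation of the odd numbers $1,3,\dots,2N-3$ in the denominators, to exactly $\binom{hk}{|A|}/\binom{hn-|A|}{hk-|A|}$; this final bookkeeping is the one genuinely laborious piece.
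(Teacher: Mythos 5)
Your setup is sound and matches the paper's: you expose the tree edges in order, observe that the event $\{E_G(S,\bar S)=A\}$ is exactly the event that each attachment $s_t$ lands on a prescribed side, and note that under this conditioning the volume of the $S$-side at each step is deterministic, so the probability is an explicit product of conditional probabilities. The genuine gap is that the decisive step --- getting from that product to $\binom{hk}{|A|}/\binom{hn-|A|}{hk-|A|}$ --- is not carried out. You replace the exact volumes by $\min$-type bounds, propose to maximize over all admissible side patterns $b$, concede that the crude bound ``is too weak,'' and then defer to an unspecified ``rearrangement/monotonicity step'' that supposedly identifies a worst interleaved pattern for which the product ``telescopes to exactly'' the binomial ratio. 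That last claim is not substantiated and is implausible as stated: the stated bound is reached in the actual proof only after several lossy inequalities (including discarding the dependence on how $A$ splits between edges emanating from $S^{[m]}$ and from $\bar S^{[m]}$), so no admissible pattern attains it with equality, and an extremal-pattern argument would have to be built from scratch. As it stands the proof is incomplete at its hardest point.

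For comparison, the paper's route avoids the optimization over patterns entirely. It bounds every \emph{crossing} factor (those $t$ with $e_t\in A$) by $1$ and keeps only the non-crossing factors; for the $i$-th non-crossing edge on a given side the conditional probability is exactly $\bigl(2(i-1)+|A^{(t-1)}|+1\bigr)/\bigl(2(\text{number of edges so far})+1\bigr)$, and a short chain of inequalities converts each such factor into the form $(i+|A|)/\bigl(j+|A|+1\bigr)$, where $j$ counts the non-crossing edges seen so far. The key observation is then that, as $t$ runs over all non-crossing times, these denominators sweep out every value $|A|+1,\dots,hn$ exactly once, so the whole product collapses to a ratio of factorials, which is in turn bounded by $\binom{hk}{|A|}/\binom{hn-|A|}{hk-|A|}$. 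If you want to complete your argument, the fix is to adopt this structure: drop the crossing factors rather than trying to control them, and use the exact (not $\min$-bounded) volumes so that the denominators telescope globally instead of requiring a per-pattern extremal analysis.
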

We will prove the result above 
holds also for 
$G\sim G_n^h$. 
Note a slightly different result was proven recently by Hazra, van der Hofstad, and Ray in \cite{hazra2023percolation} for several different preferential attachment models, including $G_n^h$ and $\tilde{G}_n^h$. 
Apart from the fact that we use slightly different notation the changes in the proof in comparison with the one from~\cite{mihail2006journal} will be very small. However, we keep here our proof for completeness. 
(The differences appear only in the sequences of inequalities (5) and (7) from \cite{mihail2006journal} which correspond to inequalities~(\ref{ineq:upp1}) and~(\ref{ineq:upp2}) in the proof below.)

\begin{lemma}\label{lem:lemma_2_us}
	Let $G\sim {G}_n^h$. For a fixed subset $S\subset \{1, \ldots, n\}$, $|S|=k$, and for a fixed subset $A \subset \{e_t : t \in [hn]\}$ with $|A|<hk$ we have
	\[
	\pr\Big( E_{G}(S, \; \bar{S}) = A \Big) \leq \frac{\binom{hk}{|A|}}{\binom{hn-|A|}{hk-|A|}}.
	\]
\end{lemma}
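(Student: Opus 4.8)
The plan is to reproduce the proof of Lemma~2 of~\cite{mihail2006journal} (stated above as Lemma~\ref{lem:lemma_2_MPS}) almost word for word, changing only the two places where the specific attachment rule of the model is used.

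First I would fix $S$ with $|S|=k$ and $A$ with $|A|=a<hk$; if $\pr(E_G(S,\bar S)=A)=0$ there is nothing to prove, so assume this probability is positive (in particular $e_1\notin A$, since $e_1$ is a loop and hence never a cross edge). Write $N=hn$ and let $MS\subseteq[N]$ be the set of mini-vertices merged to form the vertices of $S$, so $|MS|=hk$ and $E_G(S,\bar S)$ is exactly the set of edges $e_t$ of $T_N$ with one endpoint in $MS$ and one in $[N]\setminus MS$. Expose the edges $e_1,\dots,e_N$ of $T_N$ in the order of their creation, and for $1\le t\le N$ let $B_t$ be the event that the cross edges (those with one endpoint in $MS$ and one outside) among $e_1,\dots,e_t$ are precisely the members of $A\cap\{e_1,\dots,e_t\}$; then $\{E_G(S,\bar S)=A\}=B_N$ and $\pr(E_G(S,\bar S)=A)=\prod_{t=2}^{N}\pr(B_t\mid B_{t-1})$. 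The crucial observation, exactly as in~\cite{mihail2006journal}, is that conditioned on $B_{t-1}$ the quantities $\vol_{T_{t-1}}(MS\cap[t-1])$ and $\vol_{T_{t-1}}(([N]\setminus MS)\cap[t-1])$ are \emph{completely determined}: each edge $e_j$ with $j\le t-1$ adds $2$, $1$ or $0$ to $\vol_{T_{t-1}}(MS\cap[t-1])$ according as $e_j$ is internal to $MS$, crosses, or avoids $MS$, and under $B_{t-1}$ the crossing edges among $e_1,\dots,e_{t-1}$ are precisely those of $A$, so both volumes depend only on $S$, $A$ and $t$. Consequently $\pr(B_t\mid B_{t-1})$ is the explicit ratio of the number of attachment choices for mini-vertex $t$ consistent with $B_t$ to the total attachment weight $2t-1$ available when mini-vertex $t$ arrives, and so $\pr(E_G(S,\bar S)=A)$ is an explicit product of such ratios.

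It then remains to bound this product by $\binom{hk}{a}/\binom{hn-a}{hk-a}$, and this is the only step in which the model enters. For $G_n^h$ the total attachment weight when mini-vertex $t$ arrives is $2t-1$ --- namely $2(t-1)$ units from existing degrees together with one unit for the possibility of a new loop at $t$ --- whereas for $\tilde{G}_n^h$ it is $2t-3$; moreover in $G_n^h$ a loop may be formed at every step, which adds $1$ to the count of good attachment choices in exactly those cases where a loop is consistent with $B_t$. Substituting these values, the chains of inequalities of~\cite{mihail2006journal} that estimate the product carry over; concretely, the two displays in~\cite{mihail2006journal} that involve model-specific quantities (their (5) and (7)) get replaced by the corresponding estimates for the normaliser $2t-1$ and the loop shifts, and the hypothesis $|A|<hk$ enters exactly as there so that the factors making up $\binom{hk}{a}/\binom{hn-a}{hk-a}$ are well defined and at most $1$.

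I expect the main obstacle --- indeed essentially the only computation --- to be checking that the extra loop weight in the denominators together with the accompanying $+1$ shifts in the numerators does not weaken the final estimate, i.e.\ that after the (considerable) telescoping the product still collapses to $\binom{hk}{a}/\binom{hn-a}{hk-a}$. This should go through with room to spare, since that bound is far from tight and the extra $+1$'s in numerator and denominator largely offset one another, but it is the step that needs care and the reason the two inequality chains have to be rewritten for $G_n^h$ rather than merely quoted.
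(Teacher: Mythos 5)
Your proposal follows essentially the same route as the paper's own proof: expose the edges in arrival order, note that conditional on the history of cross-edges the volumes of the two sides of the mini-vertex partition are completely determined, and rework only the two model-specific inequality chains of Mihail et al.\ (their (5) and (7)) to account for the normaliser $2t-1$ and the extra $+1$ coming from the possible loop at each step. The single computation you defer is exactly the content of the paper's inequalities~(\ref{ineq:upp1}) and~(\ref{ineq:upp2}), and it does go through as you predict, yielding the same telescoping product and the claimed bound.
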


\newcommand{\good}{ 
	E_{G}(S, \; \bar{S})
}
\newcommand{\goodt}{
	E_{T_t}(S^{[m]}, \; \bar{S}^{[m]})
}
\newcommand{\goodtminusone}{
	E_{T_{t-1}}(S^{[m]}, \; \bar{S}^{[m]})
}
\newcommand{\inbadt}{
	\notin E_{T_t}(S^{[m]}, \; \bar{S}^{[m]})
}

\begin{proof} 
	For $w \in [hn]$ let $A^{(w)} = \{e_t \in A: t \in [w]\}$ (thus $A^{(w)}$ is the subset of edges of $T_w$ in $A$). By $S^{[m]}$ we denote the set of mini-vertices corresponding to vertices in $S$, by $\bar{S}^{[m]}$ the set of mini-vertices corresponding to vertices in $\bar{S}$. %
	Note that if $e_1 \in A$ then $\pr\Big( E_{G}(S, \; \bar{S}) = A \Big) = 0$ therefore we assume that $e_1 \notin A$. Additionally, w.l.o.g., assume that $1 \in \bar{S}$ (as we can always rename $S$ and $\bar{S}$).
	Next, let $k_1 = |\{e_t \in A: t \in S^{[m]}\}|$ 
    and $k_2 = |\{e_t \in A: t \in \bar{S}^{[m]}\}|$, thus $k_1 + k_2 = |A|$. We also define a set of arrival times of edges 
	\[
	X=\{t \in S^{[m]} : e_t \notin A\}
	\]
	and put them in a natural order, i.e.,
	\[
	X = \{t_1, t_2, \ldots, t_{hk-k_1}\}, \quad \textnormal{where} \quad  t_1 < t_2< \ldots < t_{hk-k_1}.
	\]
	Analogously
	\[
	\begin{split}
		\bar{X} & =\{t\in \bar{S}^{[m]} : e_t \notin A \} \\
		& = \{\bar{t}_1 = 1, \bar{t}_2, \bar{t}_3, \ldots, \bar{t}_{hn-hk-k_2}\}, \quad \textnormal{where} \quad \bar{t}_1 < \ldots < \bar{t}_{hn-hk-k_2}.
	\end{split}
	\]
	For $w \in [hn]$ let also $X^{(w)} = X\cap [w]$ and  $\bar{X}^{(w)} = \bar{X}\cap [w]$.
	
	Note that
	\begin{equation} \label{ineq:main}
		\begin{split}
			\pr\Big(&\good  
			=  A \Big)  =  \pr \Big( e_1 \notin E_{T_1}(S^{[m]},\bar{S}^{[m]}) \Big)\\
			& \times \prod_{e_t \in A} \pr\Big( e_t \in \goodt \;  \Big| \; \goodtminusone = A^{(t-1)} \Big)\\
			& \times \prod_{e_t \notin A \cup \{e_1\}} \pr\Big( e_t \inbadt \;  \Big| \; \goodtminusone = A^{(t-1)} \Big)\\
			\leq & \prod_{e_t \notin A \cup \{e_1\}} \pr\Big( e_t \inbadt  \;  \Big| \; \goodtminusone = A^{(t-1)} \Big) \\
			= & \prod_{i = 1}^{hk-k_1} \pr\Big( e_{t_i} \notin E_{T_{t_i}}(S^{[m]}, \bar{S}^{[m]}) \;  \Big| \; E_{T_{t_i-1}}(S^{[m]}, \bar{S}^{[m]}) = A^{(t_i-1)} \Big)\\
			& \times \prod_{i=2}^{hn-hk-k_2} \pr\Big( e_{\bar{t}_i} \notin  E_{T_{\bar{t}_i}}(S^{[m]}, \bar{S}^{[m]}) \;  \Big| \; E_{T_{\bar{t}_i-1}}(S^{[m]}, \bar{S}^{[m]}) = A^{(\bar{t_i}-1)} \Big).
		\end{split}
	\end{equation}
	By construction of the model we have
	\begin{equation} \label{ineq:S}
		\begin{split}
			\pr\Big( e_{t_i} & \notin E_{T_{t_i}}(S^{[m]}, \bar{S}^{[m]}) \;  \Big| \; E_{T_{t_i-1}}(S^{[m]}, \bar{S}^{[m]})		 = A^{(t_i-1)}\Big) \\&
			= \frac{\vol_{T_{t_i-1}}(S^{[m]})+1}{\vol(T_{t_i-1})+1} = \frac{2(i-1)+|A^{(t_i-1)}|+1}{2(|A^{(t_i-1)}|+|X^{(t_i-1)}|+|\bar{X}^{(t_i-1)}|)+1}.
		\end{split}
	\end{equation}
	Indeed, this is the probability that when mini-vertex $t_i$ arrives, it establishes an edge that will connect it with another mini-vertex from $S$, given that of the edges so far, i.e. $\{e_1, \ldots, e_{t_i-1}\}$ those in $A$ are exactly the ones that lie between $S^{[m]}$ and $\bar{S}^{[m]}$. Recall that $e_{t_i}$ can be a loop (this is where $+1$ in numerator and denominator comes from). Note also that among the edges so far, each edge from $\{e_{t_1}, e_{t_2}, \ldots, e_{t_{i-1}}\}$ contributes $2$ to $\vol_{T_{t_i-1}}(S^{[m]})$ (which gives $2(i-1)$ in numerator) and each edge from $A^{(t_i-1)}$ contributes $1$ to $\vol_{T_{t_i-1}}(S^{[m]})$ (which gives $|A^{(t_i-1)}|$ in numerator). The formula for $\vol(T_{t_i-1})$ comes simply from the fact that each edge that appeared so far contributes $2$ to the total volume of the graph.
	
	Likewise, we have
    \begin{equation} \label{ineq:Sbar}
		\begin{split}
			\pr\Big( e_{\bar{t}_i} & \notin E_{T_{\bar{t}_i}}(S^{[m]}, \bar{S}^{[m]}) \;  \Big| \; E_{T_{\bar{t}_i-1}}(S^{[m]}, \bar{S}^{[m]})		 = A^{(\bar{t}_i-1)}\Big) \\&
			= \frac{\vol_{T_{\bar{t}_i-1}}(\bar{S}^{[m]})+1}{\vol(T_{\bar{t}_i-1})+1} = \frac{2(i-1)+|A^{(\bar{t}_i-1)}|+1}{2(|A^{(\bar{t}_i-1)}|+|X^{(\bar{t}_i-1)}|+|\bar{X}^{(\bar{t}_i-1)}|)+1}.
		\end{split}
	\end{equation}

	Note that 
	\begin{equation} \label{eq:hitsAll}
		\bigcup_{t: e_t \notin A} \{ |X^{(t)}|+|\bar{X}^{(t)}|\} = \{1, \ldots, hn-|A|\}.
	\end{equation}
	To see why this set `hits' all possible values observe that if the set $E_{G}(S, \; \bar{S})$ is exactly $A$ then the value of interest increments, i.e.,  $|X^{(t)}|+|\bar{X}^{(t)}|=|X^{(t-1)}|+|\bar{X}^{(t-1)}|+1$ only if $e_t \notin A$.
	
	Let us now upperbound the expression from (\ref{ineq:S}). We get
	\begin{equation} \label{ineq:upp1}
		\begin{split}
			& \frac{2(i-1)+|A^{(t_i-1)}|+1}{2(|A^{(t_i-1)}|+|X^{(t_i-1)}|+|\bar{X}^{(t_i-1)}|)+1} 
			\overset{[1]}\leq \frac{2(i-1)+|A^{(t_i-1)}|+1}{2(|X^{(t_i-1)}|+|\bar{X}^{(t_i-1)}|)+|A^{(t_i-1)}|+1} \\ 
			& \overset{[2]}\leq \frac{2(i-1)+|A|+1}{2(|X^{(t_i-1)}|+|\bar{X}^{(t_i-1)}|)+|A|+1}
			\overset{[3]}\leq \frac{2i+|A|}{2(|X^{(t_i-1)}|+|\bar{X}^{(t_i-1)}|)+|A|+2} \\
			& = \frac{i+|A|/2}{|X^{(t_i-1)}|+|\bar{X}^{(t_i-1)}|+|A|/2+1}
			\overset{[4]}\leq \frac{i+|A|}{|X^{(t_i-1)}|+|\bar{X}^{(t_i-1)}|+|A|+1},
		\end{split}
	\end{equation}
	where the marked inequalities come from:
	\begin{enumerate}[label={[\arabic*]}]
		\item subtracting $|A^{(t_i-1)}|$ from the denominator,
		\item adding $|A|-|A^{(t_i-1)}|\geq 0$ to the numerator and the denominator,
		\item adding $1$ to the numerator and the denominator,
		\item adding $|A|/2$ to the numerator and the denominator.
	\end{enumerate}
	The same sequence of operations gives the following upper bound for the expression in~(\ref{ineq:Sbar}):
	\begin{equation} \label{ineq:upp2}
		\frac{2(i-1)+|A^{(\bar{t}_i-1)}|+1}{2(|A^{(\bar{t}_i-1)}|+|X^{(\bar{t}_i-1)}|+|\bar{X}^{(\bar{t}_i-1)}|)+1} \leq \frac{i+|A|}{|X^{(\bar{t}_i-1)}|+|\bar{X}^{(\bar{t}_i-1)}|+|A|+1}.
	\end{equation}
	Now, by (\ref{ineq:main}), (\ref{ineq:S}), (\ref{ineq:Sbar}), (\ref{eq:hitsAll}), (\ref{ineq:upp1}) and (\ref{ineq:upp2}) and assuming $|X^{(0)}|+|\bar{X}^{(0)}|=0$ we may write
	\begin{equation} \label{ineq:interm}
		\begin{split}
			\pr\Big(&\good =  A \Big) \\
			\leq & \prod_{i = 1}^{hk-k_1} \frac{i+|A|}{|X^{(t_i-1)}|+|\bar{X}^{(t_i-1)}|+|A|+1}  \prod_{i=2}^{hn-hk-k_2} \frac{i+|A|}{|X^{(\bar{t}_i-1)}|+|\bar{X}^{(\bar{t}_i-1)}|+|A|+1}\\
			= &  \prod_{i = 1}^{hk-k_1} \frac{i+|A|}{|X^{(t_i-1)}|+|\bar{X}^{(t_i-1)}|+|A|+1} \prod_{i=1}^{hn-hk-k_2} \frac{i+|A|}{|X^{(\bar{t}_i-1)}|+|\bar{X}^{(\bar{t}_i-1)}|+|A|+1}\\
			= & \frac{ \prod_{i = 1}^{hk-k_1} (i+|A|) \prod_{i=1}^{hn-hk-k_2} (i+|A|) }{\prod_{i=1}^{hn-|A|}(i+|A|)} \overset{[1]}=
			\frac{ \left( \prod_{i = 1}^{hk-k_1+|A|} i\right) \left(\prod_{i=1}^{hn-hk-k_2+|A|} i \right)}{(hn)!|A|!} \\
			\overset{[2]}= & \frac{(hk+k_2)!(hn-hk+k_1)!}{(hn)!|A|!} \\
			= & \left(\prod_{i=0}^{k_2-1}\frac{hk+k_2-i}{hn-i}\right) \left(\prod_{i=0}^{k_1-1}\frac{hn-hk+k_1-i}{hn-k_2-i}\right) \frac{(hk)!(hn-hk)!}{(hn-|A|)!|A|!}\\
			\leq & \frac{(hk)!(hn-hk)!}{(hn-|A|)!|A|!},
		\end{split}
	\end{equation}
	where the marked equalities come from 
	\begin{enumerate}[label={[\arabic*]}]
		\item multiplying the numerator and the denominator by $(|A|!)^2$,
		\item using the fact that $k_1+k_2 = |A|$
	\end{enumerate}
	and the last inequality follows by the two inequalities:
	\[
	\frac{hk+k_2-i}{hn-i} \leq 1, \quad 0 \leq i \leq k_2-1,
	\]
	\[
	\frac{hn-hk+k_1-i}{hn-k_2-i} \leq 1, \quad 0 \leq i \leq k_1-1.
	\]
	The first one is true since $k_2 \leq hn-hk = |\bar{S}^{[m]}|$ and the second one is true by the statement of the lemma $hk \geq |A| = k_1 + k_2$.
	
	Finally, multiplying the numerator and the denominator of the final expression in (\ref{ineq:interm}) by $(hk-|A|)!$ we get
	\[
	\begin{split}
		\pr\left( \good =  A \right) & \leq \frac{(hk-|A|)!(hn-hk)!}{(hn-|A|)!} \cdot \frac{(hk)!}{|A|!(hk-|A|)! } \\
		& = \frac{\binom{hk}{|A|}}{\binom{hn-|A|}{hk -|A|}}.
	\end{split}
	\]
	This completes the proof.
\end{proof}

\subsection{Proof of Lemma~\ref{lem:dull}} \label{app.two_lemmas}
\begin{proof}[of Lemma~\ref{lem:dull}]
	It is easy to check that the inequality is true for $u=1/2$. Thus for the rest of the proof assume $0<u<1/2$. Aiming for a contradiction assume that the inequality is the other way round. Then, subtracting $u/2$ from both sides and multiplying both sides by $2(2+\delta)(2(1-u)+\delta u)$ yields 
	\[
	2\delta(2(1-u) + \delta u) > 2\delta u (2+\delta) + (1-2u)(2+\delta)(2(1-u)+\delta u).
	\]
	Expanding out we get
	\[
	4\delta (1-u) + 2\delta^2 u > 4\delta u+2\delta^2 u + (1-2u)( 4(1-u)+2\delta(1-u)+ 2\delta u+\delta^2u  ).
	\]
	Rearranging gives
	\[
	4\delta (1-2u)  >  (1-2u)( 4(1-u)+2\delta + \delta^2u  ).
	\]
	We divide by $(1-2u)$ and rearrange to get
	\[
	2\delta  >   4(1-u) + \delta^2u   
	\]
	which is a contradiction since $\delta \leq 1$ and $4(1-u)\geq 2$.    
\end{proof}

\end{document}